\documentclass[11pt,english]{article}
\usepackage{mathptmx}

\usepackage[T1]{fontenc}
\usepackage[latin9]{inputenc}
\usepackage[a4paper]{geometry}
\geometry{verbose,tmargin=2cm,bmargin=2cm,lmargin=2cm,rmargin=2cm}
\usepackage{amsmath}
\usepackage{amsthm}
\usepackage{amssymb}
\usepackage{stackrel}

\makeatletter
\numberwithin{equation}{section}
\numberwithin{figure}{section}
\theoremstyle{plain}
\newtheorem{thm}{\protect\theoremname}
\theoremstyle{plain}
\newtheorem{lem}[thm]{\protect\lemmaname}
\theoremstyle{plain}
\newtheorem{cor}[thm]{\protect\corollaryname}
\theoremstyle{plain}
\newtheorem{prop}[thm]{\protect\propositionname}
\theoremstyle{remark}
\newtheorem{rem}[thm]{\protect\remarkname}

\makeatother

\usepackage{babel}
\providecommand{\corollaryname}{Corollary}
\providecommand{\lemmaname}{Lemma}
\providecommand{\propositionname}{Proposition}
\providecommand{\remarkname}{Remark}
\providecommand{\theoremname}{Theorem}

\begin{document}
\title{A class of multidimensional quadratic BSDEs}
\author{Zhongmin Qian \thanks{Exeter College, Turl Street, Oxford OX1 3DP. Research supported partially
by ERC grant ESig ID 291244. Email: $\mathtt{qianz@maths.ox.ac.uk}$}, Shujin Wu\thanks{$ $School of Finance and Statistics, East China Normal University,
500 Dongchuan Road, Shanghai 200241. Email: $\mathtt{sjwu@stat.ecnu.edu.cn}$} $\mbox{ \ensuremath{}\ensuremath{}}$and Yimin Yang\thanks{$ $Mathematical Institute, University of Oxford, Oxford OX2 6GG.
Email: $\mathtt{yimin.yang@maths.ox.ac.uk}$} $\mbox{ \ensuremath{}\ensuremath{}}$}
\maketitle
\begin{abstract}
In this paper we study a multidimensional quadratic BSDE with a particular
class of product generators and give a result of existence of solution
in a suitable complete metric space under some constraints on parameters.
We also use that result to derive the existence and uniqueness of
solution to the one dimensional case with bounded terminal values
and show the existence of solution to a lower triangular quadratic
BSDE with certain bounded terminal values.\\
\\
\textit{MSC}:\textbf{ }60H10; 60H30; 34F05 \\
\textit{Keywords}:\textbf{ }Multidimensional BSDEs, Quadratic BSDEs,
BMO martingales
\end{abstract}

\section{Introduction}

A multidimensional quadratic Backward Stochastic Differential Equation
(BSDE) on $\left[0,T\right]$ with $T$ being the terminal time, according
to the formulation put forward by Pardoux and Peng \cite{key-11},
is a stochastic integral equation with
\begin{equation}
Y_{t}=\xi+\int_{t}^{T}f\left(s,Y_{s},Z_{s}\right)ds-\int_{t}^{T}Z_{s}dW_{s}\label{eq:1}
\end{equation}
where $Y_{t}$ is $\mbox{\ensuremath{\boldsymbol{R}}}^{d}$-valued,
$Z_{t}$ is $\mbox{\ensuremath{\boldsymbol{R}}}^{d\times k}$-valued,
terminal value $\xi$ is $\mbox{\ensuremath{\boldsymbol{R}}}^{d}$-valued
and $\mathcal{F}_{T}$ measurable. The generator function $f:\left[0,T\right]\times\Omega\times\ensuremath{\boldsymbol{R}^{d}}\times\mbox{\ensuremath{\boldsymbol{R}}}^{d\times k}\rightarrow\ensuremath{\boldsymbol{R}}^{d}$
is of quadratic growth and $W$ is a standard $k$-dimensional Brownian
motion defined on $\left(\Omega,\mathcal{F},\left(\mathcal{F}_{t}\right),\mathbb{P}\right)$
where $\left(\mathcal{F}_{t}\right)$ is the Brownian filtration.

BSDE with quadratic growth can be used to solve problems such as utility
maximization with exponential utility function. They were studied
by Kobylanski \cite{key-10} and extended by others for example \cite{key-3}\cite{key-4}\cite{key-7}
and etc. More precisely in 2000, by using the monotonicity method
adopted from PDE theory, Kobylanski \cite{key-10} solved a class
of one dimensional BSDEs with generator function being of quadratic
growth in $Z$. This particular class of quadratic BSDEs with unbound
terminal values were further studied by Briand and Hu \cite{key-3}\cite{key-4}
and Delbaen, Hu and Richou \cite{key-7}. In 2013, Barrieu and El
Karoui \cite{key-1} adopted a different approach to prove the existence
under conditions similar to those of Briand and Hu \cite{key-3},
while Briand and Elie \cite{key-2} gave a concise study for the case
when the terminal value $\xi$ is bounded. The method used in the
present paper to get the main result was partially inspired by the
method in Tevzadze \cite{key-12} for solving existence of solutions
to a quadratic BSDE driven by a continuous martingale with bounded
terminal values. The case of multidimensional quadratic BSDEs seems
significantly more difficult than that of Lipschitz BSDEs, and the
methods used in literature are often quite involved, and up until
now the results about quadratic BSDEs are mostly only for the one-dimensional
case, and they heavily relay on comparison theorems. In 2015, P. Cheridito
and K. Nam \cite{key-5} discussed special systems of BSDEs assuming
Markovian and subquadraticity because of filtration issue. Recently,
based on a result for one-dimensional BSDEs in Briand and Hu \cite{key-3},
Hu and Tang \cite{key-8} proved the existence and uniqueness of solution
to a multidimensional BSDE with diagonal quadratic generator assuming
that each component $f^{i}$ of the generator $f$ depends only on
the $i$th row of the matrix variable $Z$ in the BSDE (\ref{eq:1}).

In this paper we study a multidimensional quadratic BSDE with a particular
class of product generators and give a result of existence of solution
in a suitable complete metric space under some constraints on parameters.
The corresponding PDEs however have significance in fluid dynamics
and in fact they are simplified version of fluid equations. We also
use that result to derive the existence and uniqueness of solution
to the one dimensional case of our BSDE with bounded terminal values
and then use the result for the one dimensional case to show the existence
of solution to a lower triangular quadratic BSDE with certain bounded
terminal values. The paper is organized as follows. In Section 2,
we point out the particular type of multidimensional quadratic BSDEs
that we study and list the assumptions we work under. In Section 3,
we firstly use properties of BMO martingales, Girsanov's theorem and
predictable representation property to show that the usual iteration
method is also well defined for our problem. Then we show that a contraction
map can be found on a suitable complete metric space on a fixed small
time interval under some constraints on parameters, which gives a
result of existence of solution to our BSDE on the whole time interval
by pasting time together. In Section 4, by using the result obtained
in Section 3 for our BSDE with small terminal values and pasting space
together, we derive a result of existence and uniqueness of solution
to the one dimensional case of our BSDE with bounded terminal values.
Finally, in Section 5, by using the result obtained in Section 4 for
the one dimensional case, we show the existence of solution to a lower
triangular quadratic BSDE with some bounded terminal values satisfying
a measurability condition.

\section{Definitions and assumptions}

Let us begin with a few notations and definitions which are nevertheless
standard in BSDE literature as follows:
\begin{itemize}
\item $\left\Vert y\right\Vert =\sqrt{y^{T}y}$ for $y\in\mbox{\ensuremath{\boldsymbol{R}}}^{d}$
and $\left\Vert z\right\Vert =\sqrt{\mbox{Tr}\left(zz^{T}\right)}$
for $z\in\mbox{\ensuremath{\boldsymbol{R}}}^{d\times k}$ denote the
Euclidean norms.
\item $\boldsymbol{E}^{\mathcal{F}_{t}}\left[\cdot\right]:=\boldsymbol{E}\left[\cdot\left|\mathcal{F}_{t}\right.\right]$
\item $W$ is a standard $k$-dimensional Brownian motion defined on $\left(\Omega,\mathcal{F},\left(\mathcal{F}_{t}\right),\mathbb{P}\right)$
and $\left(\mathcal{F}_{t}\right)_{t\in\left[0,T\right]}$ is its
Brownian filtration.
\item $\mathcal{M}^{2}\left(\mbox{\ensuremath{\boldsymbol{R}}}^{d}\right)$
and $\mathcal{M}^{2}\left(\mbox{\ensuremath{\boldsymbol{R}}}^{d\times k}\right)$
denote respectively the Banach spaces of progressively measurable
processes $Y$ and $Z$ such that $\left\Vert Y\right\Vert _{\mathcal{M}^{2}}^{2}=\boldsymbol{E}\int_{0}^{T}$$\left\Vert Y_{t}\right\Vert ^{2}$$dt<\infty$
and $\left\Vert Z\right\Vert _{\mathcal{M}^{2}}^{2}=\boldsymbol{E}\int_{0}^{T}$$\left\Vert Z_{t}\right\Vert ^{2}$$dt<\infty$. 
\item $\mathcal{S}^{\infty}\left(\mbox{\ensuremath{\boldsymbol{R}}}^{d}\right)$
denotes the Banach space of bounded progressively measurable processes
$Y$.
\item $\mathcal{S}_{C_{1}}^{\infty}\left(\mbox{\ensuremath{\boldsymbol{R}}}^{d}\right)$
denotes the collection of bounded progressively measurable processes
$Y$ such that $\left\Vert Y\right\Vert _{\mathcal{S}^{\infty}}\leq C_{1}$
and $C_{1}$ is a non negative constant.
\item A continuous square integrable martingale $M$ with $M_{0}=0$ is
a BMO martingale if 
\[
\left\Vert M\right\Vert _{BMO}^{2}=\underset{\tau}{\sup}\frac{\boldsymbol{E}\left\Vert M_{T}-M_{\tau}\right\Vert ^{2}}{\mathbb{P}\left(\tau<T\right)}<\infty
\]
where $T$ is the terminal time and the supremum is taken over all
stopping times $\tau$ bounded by $T$, with the convention that if
$\mathbb{P}\left(\tau=T\right)=1$ then $\frac{\boldsymbol{E}\left\Vert M_{T}-M_{\tau}\right\Vert ^{2}}{\mathbb{P}\left(\tau<T\right)}=0$.
\item $\mathcal{B}\left(\mbox{\ensuremath{\boldsymbol{R}}}^{d\times k}\right)$
denotes the space of progressively measurable processes $Z$ such
that $\int Z_{s}dW_{s}$ is a BMO martingale and define $\left\Vert Z\right\Vert _{\mathcal{B}}=\left\Vert \int Z_{s}dW_{s}\right\Vert _{BMO}$.
Then $\left(\mathcal{B},\left\Vert \cdot\right\Vert _{\mathcal{B}}\right)$
is a Banach space due to the fact that the space of BMO martingales
null at zero is Banach and the definition of stochastic integral.
\item $\mathcal{B}_{\sqrt{R}}\left(\mbox{\ensuremath{\boldsymbol{R}}}^{d\times k}\right)=\left\{ Z\in\mathcal{B}\left(\mbox{\ensuremath{\boldsymbol{R}}}^{d\times k}\right):\left\Vert Z\right\Vert _{\mathcal{B}}\leq\sqrt{R}\right\} $
with $R$ being a positive constant, which is a closed subset of $\mathcal{B}\left(\mbox{\ensuremath{\boldsymbol{R}}}^{d\times k}\right)$.
\item Since the definition of BMO space depends on the underlying probability
measure, we denote by BMO$\left(\mathbb{P}\right)$ the BMO space
under $\mathbb{P}$ and by BMO$\left(\mathbb{Q}\right)$ the BMO space
under $\mathbb{Q}$ respectively in case of necessity. For the same
reason, we also denote by $\mathcal{B}\left(\mbox{\ensuremath{\boldsymbol{R}}}^{d\times k}\right)\left(\mathbb{\mathbb{P}}\right)$
the space of progressively measurable processes $Z$ such that $\int Z_{s}dW_{s}\in$BMO$\left(\mathbb{P}\right)$
and by $\mathcal{B}\left(\mbox{\ensuremath{\boldsymbol{R}}}^{d\times k}\right)\left(\mathbb{Q}\right)$
the space of progressively measurable processes $Z$ such that $\int Z_{s}dW_{s}^{\mathbb{Q}}\in$BMO$\left(\mathbb{Q}\right)$
where $W^{\mathbb{Q}}$ is a standard $k$-dimensional Brownian motion
under $\mathbb{Q}$.
\end{itemize}
We consider the following BSDE:
\begin{align}
\begin{cases}
dY_{t} & =Z_{t}f\left(Y_{t},Z_{t}\right)dt+Z_{t}dW_{t}\\
Y_{T} & =\xi
\end{cases}\label{eq:4.1}
\end{align}
where $Y_{t}$ is $\mbox{\ensuremath{\boldsymbol{R}}}^{d}$-valued
, $Z_{t}$ is $\mbox{\ensuremath{\boldsymbol{R}}}^{d\times k}$-valued,
$f$ is $\mbox{\ensuremath{\boldsymbol{R}}}^{k}$-valued and $\xi$
is $\mbox{\ensuremath{\boldsymbol{R}}}^{d}$-valued and $\mathcal{F}_{T}$
measurable, which should be interpreted as a stochastic integral equation
(\ref{eq:1}).

We make the following assumptions:

$\left(\mathcal{A}1\right)$ $\left\Vert \xi\right\Vert \leq C_{1}$
for some constant $C_{1}>0$, i.e. $\xi$ is bounded.

$\left(\mathcal{A}2\right)$ $f$ satisfies the Lipschitz condition
and has a linear growth: 
\begin{align*}
\left\Vert f\left(y_{1},z_{1}\right)-f\left(y_{2},z_{2}\right)\right\Vert  & \leq C_{2}\left\Vert y_{1}-y_{2}\right\Vert +C_{3}\left\Vert z_{1}-z_{2}\right\Vert ,\\
\left\Vert f\left(y_{1},z_{1}\right)\right\Vert  & \leq C_{2}\left\Vert y_{1}\right\Vert +C_{3}\left\Vert z_{1}\right\Vert +C_{4},
\end{align*}
for any $y_{1},y_{2}\in\mbox{\ensuremath{\boldsymbol{R}}}^{d}$ and
$z_{1},z_{2}\in\mbox{\ensuremath{\boldsymbol{R}}}^{d\times k}$, so
that $zf\left(y,z\right)$ has quadratic growth in $z$. $C_{2},C_{3},C_{4}$
are non negative constants. $f\left(Y,Z\right)$ is progressively
measurable when $\left(Y,Z\right)$ is progressively measurable.

By a solution to (\ref{eq:4.1}), we mean a pair of stochastic processes
$\left(Y,Z\right)$ on $\left(\Omega,\mathcal{F},\left(\mathcal{F}_{t}\right),\mathbb{P}\right)$,
where $Y=\left(Y_{t}\right)\in\mathcal{S}^{\infty}\left(\mbox{\ensuremath{\boldsymbol{R}}}^{d}\right)$
and $Z=\left(Z_{t}\right)\in\mathcal{B}\left(\mbox{\ensuremath{\boldsymbol{R}}}^{d\times k}\right)$.
Moreover $Z\in\mathcal{B}\left(\mbox{\ensuremath{\boldsymbol{R}}}^{d\times k}\right)$
implies that $Z\in\mathcal{M}^{2}\left(\mbox{\ensuremath{\boldsymbol{R}}}^{d\times k}\right)$
due to the fact that 

\begin{equation}
\left\Vert Z\right\Vert _{\mathcal{M}^{2}}^{2}=\boldsymbol{E}\left(\int_{0}^{T}\left\Vert Z_{s}\right\Vert ^{2}ds\right)=\boldsymbol{E}\left\Vert \int_{0}^{T}Z_{s}dW_{s}\right\Vert ^{2}\leq\left\Vert \int Z_{s}dW_{s}\right\Vert _{BMO}^{2}=\left\Vert Z\right\Vert _{\mathcal{B}}^{2}.
\end{equation}

The following properties about BMO martingales are well known. If
$\int Z_{s}dW_{s}$ is a BMO martingale, then
\[
\boldsymbol{E}^{\mathcal{F}_{t}}\left(\int_{t}^{T}\left\Vert Z_{s}\right\Vert ^{2}ds\right)\leq\left\Vert \int Z_{s}dW_{s}\right\Vert _{BMO}^{2}=\left\Vert Z\right\Vert _{\mathcal{B}}^{2},\forall t\in\left[0,T\right]
\]
and if $\boldsymbol{E}^{\mathcal{F}_{t}}\left(\int_{t}^{T}\left\Vert \tilde{Z}_{s}\right\Vert ^{2}ds\right)\leq N$,
for every $t\in\left[0,T\right]$ where $N$ is a non negative constant,
then $\int\tilde{Z}_{s}dW_{s}$ is a BMO martingale and
\[
\left\Vert \tilde{Z}\right\Vert _{\mathcal{B}}^{2}=\left\Vert \int\tilde{Z}_{s}dW_{s}\right\Vert _{BMO}^{2}\leq N,
\]
see Kazamaki \cite{key-9} for details.

The following lemma is standard, whose proof can be found in Hu and
Tang \cite{key-8} for example, and it plays an important role in
some of the subsequent arguments.
\begin{lem}
\label{lem:For-K>0,-there}For K>0, there are constants $c_{1}>0$
and $c_{2}>0$ such that for any BMO martingale $M$, we have for
any BMO martingale N with $\left\Vert N\right\Vert _{BMO\left(\mathbb{P}\right)}\leq K$
that
\[
c_{1}\left\Vert M\right\Vert _{BMO\left(\mathbb{P}\right)}\leq\left\Vert \tilde{M}\right\Vert _{BMO\left(\mathbb{Q}\right)}\leq c_{2}\left\Vert M\right\Vert _{BMO\left(\mathbb{P}\right)}
\]
where $\tilde{M}=M-\left\langle M,N\right\rangle $ and $\left.\frac{d\mathbb{Q}}{d\mathbb{P}}\right|_{\mathcal{F}_{t}}={\cal E}\left(N\right)_{t}$.
\end{lem}

The following corollary can be obtained immediately by Lemma \ref{lem:For-K>0,-there}.
\begin{cor}
\label{cor:Assume-that-}Assume that $N\in$ \textup{BMO}$\left(\mathbb{P}\right)$,
then $M\in$\textup{BMO}$\left(\mathbb{P}\right)$ if and only if
$\tilde{M}\in$\textup{BMO}$\left(\mathbb{Q}\right)$, where $\tilde{M}$
and $\mathbb{Q}$ are defined as in Lemma \ref{lem:For-K>0,-there}.
\end{cor}

\section{Existence of solution}

We will use the iteration method. Let $\mathcal{S}_{C_{1}}^{\infty}\times\mathcal{B}$
denote, for simplicity, the space $\mathcal{S}_{C_{1}}^{\infty}\left(\mbox{\ensuremath{\boldsymbol{R}}}^{d}\right)\times\mathcal{B}\left(\mbox{\ensuremath{\boldsymbol{R}}}^{d\times k}\right)$.
Suppose that $\left(Y,Z\right)\in\mathcal{S}_{C_{1}}^{\infty}\times\mathcal{B}$
and $f$ and $\xi$ satisfy the above assumptions $\left(\mathcal{A}1\right)$
and $\left(\mathcal{A}2\right)$, then we have, by the linear growth
of $f$, boundedness of $Y$ and properties of the BMO martingale
$\int Z_{s}dW_{s}$, that $\int f\left(Y_{s},Z_{s}\right)^{T}dW_{s}$
is also a BMO martingale, which in turn implies that the stochastic
exponential of $-\int f\left(Y_{s},Z_{s}\right)^{T}dW_{s}$ is a martingale
on $\left[0,T\right]$. Hence we define a probability measure $\mathbb{Q}$
by 
\begin{equation}
\left.\frac{d\mathbb{Q}}{d\mathbb{P}}\right|_{\mathcal{F}_{T}}={\cal E}\left(-\int f\left(Y_{s},Z_{s}\right)^{T}dW_{s}\right)_{T}
\end{equation}
and define 
\begin{equation}
dW_{t}^{\mathbb{Q}}=dW_{t}+f\left(Y_{t},Z_{t}\right)dt.\label{eq:3.2}
\end{equation}
Then $W^{\mathbb{Q}}$ is a standard $k$-dimensional Brownian motion
under probability measure $\mathbb{Q}$. The lemma below about continuous
martingale representation is well known and its proof may be found
in Cohen and Elliott \cite{key-6}.
\begin{lem}
\label{lem:Suppose--is}Suppose $M$ is a $d$-dimensional continuous
local martingale under $\mathbb{Q}$ with $\mathbb{Q}$ defined as
above, then there exists a unique predictable process $H$ such that
$M-M_{0}=\int H_{s}dW_{s}^{\mathbb{Q}}$.
\end{lem}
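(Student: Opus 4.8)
Lemma~\ref{lem:Suppose--is} is the predictable representation property (PRP) for the $\mathbb{Q}$-Brownian motion $W^{\mathbb{Q}}$ relative to $(\mathcal{F}_t)$, and the plan is to deduce it from the classical PRP of $W$ under $\mathbb{P}$ by passing through the density process. Two facts set the stage. Since $\int f(Y_s,Z_s)^{T}dW_s$ is a BMO martingale under $\mathbb{P}$, the density $D_t:=\mathcal{E}\bigl(-\int f(Y_s,Z_s)^{T}dW_s\bigr)_t$ is a strictly positive, uniformly integrable $\mathbb{P}$-martingale with $dD_t=-D_tf(Y_t,Z_t)^{T}dW_t$, so $\mathbb{Q}\sim\mathbb{P}$ and the two measures share their null sets; and since $(\mathcal{F}_t)$ is the Brownian filtration, every continuous $\mathbb{P}$-local martingale is a stochastic integral against $W$.

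Let $M$ be a continuous $\mathbb{Q}$-local martingale. First I would check, via the local-martingale form of Bayes' rule (localise $M$ to a bounded $\mathbb{Q}$-martingale and use $\boldsymbol{E}_{\mathbb{Q}}[\,\cdot\,]=\boldsymbol{E}_{\mathbb{P}}[D_T\,\cdot\,]$), that $N:=DM$ is a continuous $\mathbb{P}$-local martingale. The PRP under $\mathbb{P}$ then produces a predictable $\mathbb{R}^{d\times k}$-valued process $G$ with $\int_0^T\|G_s\|^2ds<\infty$ a.s.\ and $N_t=N_0+\int_0^tG_sdW_s$. Writing $M_t=N_tD_t^{-1}$ and applying integration by parts, with $d(D_t^{-1})=D_t^{-1}f(Y_t,Z_t)^{T}dW_t+D_t^{-1}\|f(Y_t,Z_t)\|^2dt$ and $d\langle N,D^{-1}\rangle_t=D_t^{-1}G_tf(Y_t,Z_t)\,dt$, one obtains
\[
dM_t=D_t^{-1}\bigl(G_t+N_tf(Y_t,Z_t)^{T}\bigr)dW_t+D_t^{-1}\bigl(G_tf(Y_t,Z_t)+N_t\|f(Y_t,Z_t)\|^2\bigr)dt .
\]
Because $dW_t^{\mathbb{Q}}=dW_t+f(Y_t,Z_t)\,dt$, the drift above is exactly $H_tf(Y_t,Z_t)\,dt$ for $H_t:=D_t^{-1}\bigl(G_t+N_tf(Y_t,Z_t)^{T}\bigr)$, whence $dM_t=H_t\,dW_t^{\mathbb{Q}}$, i.e.\ $M-M_0=\int H_s\,dW_s^{\mathbb{Q}}$ with $H$ predictable. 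That the $dt$-terms reorganise precisely into the $W^{\mathbb{Q}}$-drift is forced by $M$ being a $\mathbb{Q}$-local martingale, and can be used as a consistency check.

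For uniqueness, if $\int_0^{\cdot}(H_s-H'_s)\,dW_s^{\mathbb{Q}}\equiv 0$ then, taking the quadratic variation under $\mathbb{Q}$ (for which $W^{\mathbb{Q}}$ is a Brownian motion), $\int_0^T\|H_s-H'_s\|^2ds=0$ $\mathbb{Q}$-a.s., hence $\mathbb{P}$-a.s.\ by equivalence, so $H=H'$ up to a $dt\times d\mathbb{P}$-null set. I expect the only delicate point to be the measure-theoretic bookkeeping rather than the algebra: justifying that $DM$ is a genuine $\mathbb{P}$-local martingale and that the representing integrand $G$ (hence $H$) is locally square-integrable, so that all the stochastic integrals are meaningful under both measures. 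Here the BMO hypothesis on $\int f(Y_s,Z_s)^{T}dW_s$ does the work, supplying via the reverse Hölder inequality good integrability of $D_T$ and $D_T^{-1}$ and ensuring $\int f(Y_s,Z_s)^{T}dW_s^{\mathbb{Q}}$ is again BMO, so the localisation arguments are routine. An alternative, essentially equivalent, route is to quote the general stability of the PRP under a locally equivalent change of measure, but carrying it out through the density process as above keeps the argument self-contained.
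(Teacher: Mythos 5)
Your argument is correct, but it takes a genuinely different route from the paper's. The paper never touches the density process: it observes that, since $\mathbb{Q}\sim\mathbb{P}$, the $\mathbb{Q}$-local martingale $M$ is still a continuous $\mathbb{P}$-semimartingale, writes its canonical $\mathbb{P}$-decomposition $M-M_{0}=N+A$, represents the local-martingale part as $N=\int H_{s}dW_{s}$ by the Brownian PRP, and then notes that $M-M_{0}-\int H_{s}dW_{s}^{\mathbb{Q}}=-\int H_{s}f\left(Y_{s},Z_{s}\right)ds+A$ is a continuous $\mathbb{Q}$-local martingale of finite variation null at $0$, hence identically zero. You instead carry the representation through the density: show $DM$ is a $\mathbb{P}$-local martingale by Bayes' rule, represent it as $\int G_{s}dW_{s}$, and recover $H=D^{-1}\left(G+Nf^{T}\right)$ by integration by parts on $M=\left(DM\right)D^{-1}$; your Itô computations (the dynamics of $D^{-1}$, the bracket $d\langle N,D^{-1}\rangle$, and the cancellation of the drift into $H_{t}f\left(Y_{t},Z_{t}\right)dt$) all check out. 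The paper's route is shorter and needs no integrability of $D^{\pm1}$ beyond equivalence of the measures, since it never manipulates the density; yours is more computational but produces the integrand $H$ explicitly in terms of the $\mathbb{P}$-representation and makes visible exactly where the BMO hypothesis enters (uniform integrability of $\mathcal{E}\left(-\int f^{T}dW\right)$ and the reverse H\"older estimates that make the localisation routine). Your uniqueness argument via the quadratic variation under $\mathbb{Q}$ is the "usual way" the paper alludes to. Both proofs are sound; yours is essentially the standard proof of stability of the PRP under an equivalent change of measure, specialised to this Girsanov kernel.
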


\begin{proof}
Since $M$ is a continuous semi-martingale under $\mathbb{P}$, $M-M_{0}=N+A$
where $N$ is a continuous local martingale null at 0 under $\mathbb{P}$
and $A$ is a finite variation process. By the martingale representation
theorem applying to $N$, we have that $N=\int H_{s}dW_{s}$ for some
predictable process $H$. Thus $M-M_{0}-\int H_{s}dW_{s}^{\mathbb{Q}}=-\int H_{s}f\left(Y_{s},Z_{s}\right)ds+A$
by equation (\ref{eq:3.2}), which implies that the continuous $\mathbb{Q}$-local
martingale $M-M_{0}-\int H_{s}dW_{s}^{\mathbb{Q}}$ is of finite variation
and null at 0. Thus we have that $M-M_{0}=\int H_{s}dW_{s}^{\mathbb{Q}}$.
Uniqueness can be proved in the usual way.
\end{proof}
Let $\delta\in\left(0,1\right)$, and consider time interval $\left[T-\delta T,T\right]$.
Let $\left(Y,Z\right)\in\mathcal{S}_{C_{1}}^{\infty}\times\mathcal{B}$
but with duration $\left[T-\delta T,T\right]$.

Since $\left\Vert \xi\right\Vert \leq C_{1}$, $\tilde{Y_{t}}=\boldsymbol{E}_{\mathbb{Q}}^{\mathcal{F}_{t}}\left[\xi\right]$
is a continuous martingale under $\mathbb{Q}$ on $\left[T-\delta T,T\right]$.
Thus by Lemma \ref{lem:Suppose--is}, there exists a unique predictable
process $\tilde{Z}$ on $\left[T-\delta T,T\right]$ such that
\begin{align}
\begin{cases}
d\tilde{Y_{t}} & =\tilde{Z_{t}}f\left(Y_{t},Z_{t}\right)dt+\tilde{Z_{t}}dW_{t}\\
\tilde{Y_{T}} & =\xi
\end{cases}\label{eq:5.2}
\end{align}
with $\left\Vert \tilde{Y}\right\Vert _{\mathcal{S}^{\infty}}\leq C_{1}$,
which means that $\tilde{Y}\in\mathcal{S}_{C_{1}}^{\infty}\left(\mbox{\ensuremath{\boldsymbol{R}}}^{d}\right)$. 
\begin{lem}
\textup{\label{lem:BMOwhere--is}$\int\tilde{Z_{t}}dW_{t}\in$BMO}$\left(\mathbb{P}\right)$
where $\tilde{Z}$ is defined as in (\ref{eq:5.2}).
\end{lem}

\begin{proof}
Since $\tilde{Y}$ defined in (\ref{eq:5.2}) belongs to BMO$\left(\mathbb{Q}\right)$
as it is bounded under $\mathbb{Q}$, it can be derived immediately
by Corollary \ref{cor:Assume-that-} that $\int\tilde{Z_{t}}dW_{t}\in$BMO$\left(\mathbb{P}\right)$. 
\end{proof}
We prove the following proposition.
\begin{prop}
\label{thm:If--then}If $C_{1}C_{3}<e^{-\frac{1}{2}}$ where $e^{-\frac{1}{2}}$
is just a universal constant, and it does not imply that it is optimal.
Then there is a non negative constant $C_{6}$ depending on $C_{1},C_{2},C_{3},C_{4}$
and $\delta T$ such that
\[
\left\Vert \tilde{Z}\right\Vert _{\mathcal{B}}^{2}\leq C_{6}+\frac{1}{2}\left\Vert Z\right\Vert _{\mathcal{B}}^{2}
\]
for any pairs $\left(Y,Z\right)$ and $\left(\tilde{Y},\tilde{Z}\right)$
on $\left[T-\delta T,T\right]$ defined by BSDE (\ref{eq:5.2}).
\end{prop}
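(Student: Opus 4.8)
The plan is to obtain an energy identity for $\tilde{Y}$ by applying It\^{o}'s formula to $t\mapsto\left\Vert \tilde{Y}_{t}\right\Vert ^{2}$ along (\ref{eq:5.2}), and then to absorb the resulting cross term using the linear growth of $f$, the bounds $\left\Vert Y\right\Vert _{\mathcal{S}^{\infty}},\left\Vert \tilde{Y}\right\Vert _{\mathcal{S}^{\infty}}\leq C_{1}$ and $\left\Vert \xi\right\Vert \leq C_{1}$, together with the BMO characterisation recalled in Section 2.

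First I would apply It\^{o}'s formula to $\left\Vert \tilde{Y}_{t}\right\Vert ^{2}=\tilde{Y}_{t}^{T}\tilde{Y}_{t}$ using $d\tilde{Y}_{t}=\tilde{Z}_{t}f(Y_{t},Z_{t})dt+\tilde{Z}_{t}dW_{t}$. Since the martingale part of $\tilde{Y}$ is $\int\tilde{Z}_{s}dW_{s}$, with trace bracket $\int\left\Vert \tilde{Z}_{s}\right\Vert ^{2}ds$, this gives
\[
d\left\Vert \tilde{Y}_{t}\right\Vert ^{2}=\Big(2\tilde{Y}_{t}^{T}\tilde{Z}_{t}f(Y_{t},Z_{t})+\left\Vert \tilde{Z}_{t}\right\Vert ^{2}\Big)dt+2\tilde{Y}_{t}^{T}\tilde{Z}_{t}dW_{t}.
\]
Because $\tilde{Z}$ is not yet known to belong to $\mathcal{B}$, I would localise with stopping times $\tau_{n}\uparrow T$ making the stochastic integral a genuine $\mathbb{P}$-martingale, integrate from $t\wedge\tau_{n}$ to $\tau_{n}$, take $\boldsymbol{E}^{\mathcal{F}_{t}}\left[\cdot\right]$, use $\left\Vert \tilde{Y}\right\Vert _{\mathcal{S}^{\infty}}\leq C_{1}$ and discard a non-positive boundary term; letting $n\to\infty$ (monotone convergence on the left, a routine limiting argument on the right) yields, for every $t\in\left[T-\delta T,T\right]$,
\[
\boldsymbol{E}^{\mathcal{F}_{t}}\Big[\int_{t}^{T}\left\Vert \tilde{Z}_{s}\right\Vert ^{2}ds\Big]\leq C_{1}^{2}+2C_{1}\,\boldsymbol{E}^{\mathcal{F}_{t}}\Big[\int_{t}^{T}\left\Vert \tilde{Z}_{s}\right\Vert \left\Vert f(Y_{s},Z_{s})\right\Vert ds\Big].
\]

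The heart of the argument is to control this cross term. Using assumption 2 and $\left\Vert Y_{s}\right\Vert \leq C_{1}$, I would estimate $\left\Vert f(Y_{s},Z_{s})\right\Vert \leq(C_{2}C_{1}+C_{4})+C_{3}\left\Vert Z_{s}\right\Vert $ and then split the right-hand side, via the conditional Cauchy--Schwarz and Young inequalities, into three pieces: a small multiple $\varepsilon\,\boldsymbol{E}^{\mathcal{F}_{t}}[\int_{t}^{T}\left\Vert \tilde{Z}_{s}\right\Vert ^{2}ds]$ to be moved to the left-hand side; a conditional expectation of a constant, which on $\left[T-\delta T,T\right]$ supplies the dependence of $C_{6}$ on $\delta T$; and a multiple of $\boldsymbol{E}^{\mathcal{F}_{t}}[\int_{t}^{T}\left\Vert Z_{s}\right\Vert ^{2}ds]$, which by the BMO bound recalled in Section 2 is at most $\left\Vert Z\right\Vert _{\mathcal{B}}^{2}$. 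The free parameters in the splitting must be tuned so that the coefficient of $\left\Vert \tilde{Z}_{s}\right\Vert ^{2}$ absorbed on the left stays strictly below $1$ \emph{and}, after dividing through, the coefficient in front of $\left\Vert Z\right\Vert _{\mathcal{B}}^{2}$ is at most $\tfrac{1}{2}$; this balance is feasible precisely under a smallness condition on $C_{1}C_{3}$, and $C_{1}C_{3}<e^{-1/2}$ is a convenient such choice. The outcome is
\[
\boldsymbol{E}^{\mathcal{F}_{t}}\Big[\int_{t}^{T}\left\Vert \tilde{Z}_{s}\right\Vert ^{2}ds\Big]\leq C_{6}+\tfrac{1}{2}\left\Vert Z\right\Vert _{\mathcal{B}}^{2}\qquad\text{for all }t\in\left[T-\delta T,T\right],
\]
with $C_{6}$ depending only on $C_{1},C_{2},C_{3},C_{4},\delta T$.

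Finally I would invoke the BMO criterion from Section 2: a uniform bound $N$ on $\boldsymbol{E}^{\mathcal{F}_{t}}[\int_{t}^{T}\left\Vert \tilde{Z}_{s}\right\Vert ^{2}ds]$ forces $\int\tilde{Z}_{s}dW_{s}$ to be a BMO martingale with $\left\Vert \tilde{Z}\right\Vert _{\mathcal{B}}^{2}\leq N$; applied with $N=C_{6}+\tfrac{1}{2}\left\Vert Z\right\Vert _{\mathcal{B}}^{2}$ this is the assertion, and it retroactively legitimises the localisation and the stochastic-integral manipulations above since it confirms $\tilde{Z}\in\mathcal{B}\subset\mathcal{M}^{2}$. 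I expect the cross-term step to be the main obstacle: a crude use of Young's inequality only yields a condition of the type $C_{1}C_{3}<\tfrac{1}{2\sqrt{2}}$, so reaching the stated $e^{-1/2}$ requires the parameters to be optimised (equivalently, one may first derive the clean bound $\boldsymbol{E}_{\mathbb{Q}}^{\mathcal{F}_{t}}[\int_{t}^{T}\left\Vert \tilde{Z}_{s}\right\Vert ^{2}ds]\leq C_{1}^{2}$, valid because under $\mathbb{Q}$ the process $\tilde{Y}$ is a bounded martingale with $d\tilde{Y}_{t}=\tilde{Z}_{t}dW_{t}^{\mathbb{Q}}$, and transfer it to $\mathbb{P}$ by controlling $d\mathbb{Q}/d\mathbb{P}$ through the BMO property of $\int f(Y_{s},Z_{s})^{T}dW_{s}$ and an associated John--Nirenberg / entropy estimate, where again the smallness of $C_{1}C_{3}$ is what keeps the relevant constants finite and the bound linear in $\left\Vert Z\right\Vert _{\mathcal{B}}^{2}$). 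The authors note the value $e^{-1/2}$ is not claimed to be optimal.
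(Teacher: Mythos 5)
Your skeleton (It\^{o} on $\Vert\tilde{Y}\Vert^{2}$, absorb the cross term, invoke the BMO criterion) is the right shape, and your localisation remarks are fine, but the proof as written does not establish the proposition on its stated range, and you essentially concede this yourself. Working with the unweighted quantity $\Vert\tilde{Y}_{t}\Vert^{2}$, the only non-negative quadratic term available to absorb the dangerous piece $2C_{1}C_{3}\Vert\tilde{Z}\Vert\Vert Z\Vert$ is the $\Vert\tilde{Z}\Vert^{2}$ term itself; Young's inequality then forces, as you compute, $C_{1}^{2}C_{3}^{2}\leq\varepsilon_{2}(1-\varepsilon_{1}-\varepsilon_{2})/2\leq 1/8$, i.e.\ $C_{1}C_{3}<\tfrac{1}{2\sqrt{2}}\approx 0.354$, which is strictly smaller than $e^{-1/2}\approx 0.607$. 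There are no further parameters to ``optimise'' in that set-up, so the claim that the balance ``is feasible precisely under $C_{1}C_{3}<e^{-1/2}$'' is not justified. The missing idea is the exponential weight: the paper applies It\^{o}'s formula to $\eta_{t}=e^{K\Vert\tilde{Y}_{t}\Vert^{2}}$, whose second-order term produces an \emph{extra} non-negative drift $2K^{2}\eta\Vert\tilde{U}\Vert^{2}$ with $\tilde{U}^{j}=\sum_{i}\tilde{Y}^{i}\tilde{Z}^{i,j}$. The cross terms are then absorbed into $2K\Vert\tilde{U}\Vert^{2}$ (where $K$ is a free large parameter) rather than into $\Vert\tilde{Z}\Vert^{2}$, which survives intact on the left; the price is the factor $\eta\leq e^{KC_{1}^{2}}$ multiplying the $\beta C_{3}\Vert Z\Vert^{2}$ contribution. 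Requiring $C_{3}\beta e^{KC_{1}^{2}}=\tfrac{1}{2}$ together with $2K-\tfrac{C_{3}}{\beta}-\tfrac{C_{1}C_{2}+C_{4}}{\alpha}\geq 0$ leads to the solvability condition $K-C_{3}^{2}e^{KC_{1}^{2}}>0$ for some $K>0$, and maximising $Ke^{-KC_{1}^{2}}$ at $K=-\tfrac{2}{C_{1}^{2}}\ln(C_{1}C_{3})$ yields exactly the threshold $C_{1}C_{3}<e^{-1/2}$.

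Your fallback suggestion --- derive $\boldsymbol{E}_{\mathbb{Q}}^{\mathcal{F}_{t}}[\int_{t}^{T}\Vert\tilde{Z}_{s}\Vert^{2}ds]\leq C_{1}^{2}$ from the fact that $\tilde{Y}$ is a bounded $\mathbb{Q}$-martingale and then transfer to $\mathbb{P}$ --- is not carried out and is unlikely to close the gap: the equivalence constants between $\mathbb{P}$-BMO and $\mathbb{Q}$-BMO norms in Kazamaki's theory depend on the BMO norm of $\int f(Y_{s},Z_{s})^{T}dW_{s}$, hence on $\Vert Z\Vert_{\mathcal{B}}$, in a nonlinear and non-explicit way, whereas the proposition needs the precise affine bound $C_{6}+\tfrac{1}{2}\Vert Z\Vert_{\mathcal{B}}^{2}$ that feeds the fixed-point argument of Theorem \ref{thm:There-exits-}. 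To repair the proof you should introduce the weight $e^{K\Vert\tilde{Y}\Vert^{2}}$ and redo the absorption as above.
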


\begin{proof}
Consider $\varphi\left(x\right)=e^{Kx}$ where $K$ is a positive
constant to be determined later. Let $\eta_{t}=\varphi\left(\left\Vert \tilde{Y_{t}}\right\Vert ^{2}\right)$.
Then by Itô's formula we have
\begin{eqnarray*}
d\left\Vert \tilde{Y}\right\Vert ^{2} & = & 2\underset{i}{\sum}\tilde{Y^{i}}d\tilde{Y^{i}}+\left\Vert \tilde{Z}\right\Vert ^{2}dt\\
 & = & 2\underset{i,j}{\sum}f_{j}\left(Y,Z\right)\tilde{Y^{i}}\tilde{Z^{i,j}}dt+\left\Vert \tilde{Z}\right\Vert ^{2}dt+2\underset{i,j}{\sum}\tilde{Y^{i}}\tilde{Z^{i,j}}dW^{j},
\end{eqnarray*}
and 
\begin{eqnarray*}
d\eta & = & K\eta d\left\Vert \tilde{Y}\right\Vert ^{2}+2K^{2}\eta\underset{j}{\sum}\left|\underset{i}{\sum}\tilde{Y^{i}}\tilde{Z^{i,j}}\right|^{2}dt\\
 & = & K\eta\left[2\underset{i,j}{\sum}f_{j}\left(Y,Z\right)\tilde{Y^{i}}\tilde{Z^{i,j}}dt+\left\Vert \tilde{Z}\right\Vert ^{2}dt\right]+2K^{2}\eta\underset{j}{\sum}\left|\underset{i}{\sum}\tilde{Y^{i}}\tilde{Z^{i,j}}\right|^{2}dt\\
 &  & +2K\eta\underset{i,j}{\sum}\tilde{Y^{i}}\tilde{Z^{i,j}}dW^{j}.
\end{eqnarray*}
Set vector $\tilde{U}$ with $\tilde{U^{j}}=\underset{i}{\sum}\tilde{Y^{i}}\tilde{Z^{i,j}}$,
so the previous equation can be written as
\[
d\eta=K\eta\left[2\tilde{U}^{T}f\left(Y,Z\right)+\left\Vert \tilde{Z}\right\Vert ^{2}+2K\left\Vert \tilde{U}\right\Vert ^{2}\right]dt+2K\eta\tilde{U}^{T}dW.
\]
Integrating the equality above from $t$ to $T$, we obtain
\[
\eta_{t}=\eta_{T}-K\int_{t}^{T}\eta\left[2\tilde{U}^{T}f\left(Y,Z\right)+\left\Vert \tilde{Z}\right\Vert ^{2}+2K\left\Vert \tilde{U}\right\Vert ^{2}\right]ds-2K\int_{t}^{T}\eta\tilde{U}^{T}dW.
\]
Since it can be derived immediately by Lemma \ref{lem:BMOwhere--is}
and the boundedness of $\tilde{Y}$ that $\int\eta\tilde{U}^{T}dW$
is a martingale, we take the conditional expectation with respect
to $\mathcal{F}_{t}$ to get
\[
\eta_{t}=\boldsymbol{E}^{\mathcal{F}_{t}}\left(\eta_{T}\right)-K\boldsymbol{E}^{\mathcal{F}_{t}}\left(\int_{t}^{T}\eta\left[2\tilde{U}^{T}f\left(Y,Z\right)+\left\Vert \tilde{Z}\right\Vert ^{2}+2K\left\Vert \tilde{U}\right\Vert ^{2}\right]ds\right).
\]
Next applying Cauchy-Schwartz inequality, the linear growth condition
of $f$ and the bound of $Y$, we deduce that
\begin{eqnarray*}
\eta_{t} & \leq & \boldsymbol{E}^{\mathcal{F}_{t}}\left(\eta_{T}\right)-K\boldsymbol{E}^{\mathcal{F}_{t}}\left(\int_{t}^{T}\eta\left[-2\left(C_{1}C_{2}+C_{3}\left\Vert Z\right\Vert +C_{4}\right)\left\Vert \tilde{U}\right\Vert +\left\Vert \tilde{Z}\right\Vert ^{2}+2K\left\Vert \tilde{U}\right\Vert ^{2}\right]ds\right)\\
 & = & \boldsymbol{E}^{\mathcal{F}_{t}}\left(\eta_{T}\right)+2\left(C_{1}C_{2}+C_{4}\right)K\boldsymbol{E}^{\mathcal{F}_{t}}\left(\int_{t}^{T}\eta\left\Vert \tilde{U}\right\Vert ds\right)\\
 &  & +2KC_{3}\boldsymbol{E}^{\mathcal{F}_{t}}\left(\int_{t}^{T}\eta\left\Vert Z\right\Vert \left\Vert \tilde{U}\right\Vert ds\right)-K\boldsymbol{E}^{\mathcal{F}_{t}}\left(\int_{t}^{T}\eta\left[\left\Vert \tilde{Z}\right\Vert ^{2}+2K\left\Vert \tilde{U}\right\Vert ^{2}\right]ds\right).
\end{eqnarray*}
Then by applying the inequalities with $\alpha,\beta>0$ 
\[
2\left\Vert \tilde{U}\right\Vert \leq\alpha+\frac{1}{\alpha}\left\Vert \tilde{U}\right\Vert ^{2}
\]
and
\[
2\left\Vert Z\right\Vert \left\Vert \tilde{U}\right\Vert \leq\beta\left\Vert Z\right\Vert ^{2}+\frac{1}{\beta}\left\Vert \tilde{U}\right\Vert ^{2},
\]
we get that
\begin{eqnarray*}
\eta_{t} & \leq & \boldsymbol{E}^{\mathcal{F}_{t}}\left(\eta_{T}\right)+\left(C_{1}C_{2}+C_{4}\right)K\boldsymbol{E}^{\mathcal{F}_{t}}\left(\int_{t}^{T}\alpha\eta ds\right)\\
 &  & -K\boldsymbol{E}^{\mathcal{F}_{t}}\left(\int_{t}^{T}\eta\left\Vert \tilde{Z}\right\Vert ^{2}ds\right)+KC_{3}\boldsymbol{E}^{\mathcal{F}_{t}}\left(\int_{t}^{T}\eta\beta\left\Vert Z\right\Vert ^{2}ds\right)\\
 &  & -K\boldsymbol{E}^{\mathcal{F}_{t}}\left(\int_{t}^{T}\eta\left[2K-\frac{C_{3}}{\beta}-\frac{\left(C_{1}C_{2}+C_{4}\right)}{\alpha}\right]\left\Vert \tilde{U}\right\Vert ^{2}ds\right).
\end{eqnarray*}
It follows that
\begin{eqnarray}
\boldsymbol{E}^{\mathcal{F}_{t}}\left(\int_{t}^{T}\eta\left\Vert \tilde{Z}\right\Vert ^{2}ds\right) & \leq & \frac{1}{K}\boldsymbol{E}^{\mathcal{F}_{t}}\left(\eta_{T}-\eta_{t}\right)+\left(C_{1}C_{2}+C_{4}\right)\boldsymbol{E}^{\mathcal{F}_{t}}\left(\int_{t}^{T}\alpha\eta ds\right)\nonumber \\
 &  & +C_{3}\beta\boldsymbol{E}^{\mathcal{F}_{t}}\left(\int_{t}^{T}\eta\left\Vert Z\right\Vert ^{2}ds\right)\nonumber \\
 &  & -\boldsymbol{E}^{\mathcal{F}_{t}}\left(\int_{t}^{T}\eta\left[2K-\frac{C_{3}}{\beta}-\frac{\left(C_{1}C_{2}+C_{4}\right)}{\alpha}\right]\left\Vert \tilde{U}\right\Vert ^{2}ds\right).\label{eq:3.4}
\end{eqnarray}
Since $\left\Vert \tilde{Y}\right\Vert _{\mathcal{S}^{\infty}}\leq C_{1}$
we deduce that $1\leq\eta\leq e^{KC_{1}^{2}}$ . We may choose constants
such that 
\begin{equation}
2K-\frac{C_{3}}{\beta}-\frac{\left(C_{1}C_{2}+C_{4}\right)}{\alpha}\geq0\label{eq:5.3}
\end{equation}
and
\begin{equation}
C_{3}\beta e^{KC_{1}^{2}}=\frac{1}{2}.\label{eq:3.6}
\end{equation}
In order to do this, it requires that
\[
K-C_{3}^{2}e^{KC_{1}^{2}}>0
\]
which means that
\[
Ke^{-1}-C_{3}^{2}e^{KC_{1}^{2}-1}>0
\]
which is possible only if $C_{1}C_{3}<e^{-\frac{1}{2}}$. When $C_{1}C_{3}<e^{-\frac{1}{2}}$
by considering 
\[
KC_{1}^{2}-C_{1}^{2}C_{3}^{2}e^{KC_{1}^{2}}>0
\]
we can choose 
\[
K=-\frac{2}{C_{1}^{2}}\ln\left(C_{1}C_{3}\right)
\]
which implies that 
\[
K-C_{3}^{2}e^{KC_{1}^{2}}>0.
\]
Then we can deduce from the previous inequality (\ref{eq:3.4}) that
\begin{eqnarray*}
\boldsymbol{E}^{\mathcal{F}_{t}}\left(\int_{t}^{T}\left\Vert \tilde{Z}_{s}\right\Vert ^{2}ds\right) & \leq & \frac{1}{K}e^{KC_{1}^{2}}+\alpha e^{KC_{1}^{2}}\left(C_{1}C_{2}+C_{4}\right)\delta T\\
 &  & +C_{3}\beta e^{KC_{1}^{2}}\boldsymbol{E}^{\mathcal{F}_{t}}\left(\int_{t}^{T}\left\Vert Z_{s}\right\Vert ^{2}ds\right)
\end{eqnarray*}
for all $t\in[T-\delta T,T]$. Using the properties of BMO martingales
we may deduce that
\begin{eqnarray*}
\left\Vert \tilde{Z}\right\Vert _{\mathcal{B}}^{2} & \leq & C_{6}+\frac{1}{2}\left\Vert Z\right\Vert _{\mathcal{B}}^{2}
\end{eqnarray*}
where
\begin{eqnarray}
C_{6} & = & e^{KC_{1}^{2}}\left[\frac{1}{K}+\alpha\left(C_{1}C_{2}+C_{4}\right)\delta T\right]\nonumber \\
 & = & \frac{1}{C_{1}^{2}C_{3}^{2}}\left[\frac{C_{1}^{2}}{-2\ln\left(C_{1}C_{3}\right)}+\alpha\left(C_{1}C_{2}+C_{4}\right)\delta T\right].\label{eq:5.4}
\end{eqnarray}
\end{proof}
The above proposition implies that we get a pair $\left(\tilde{Y},\tilde{Z}\right)\in\mathcal{S}_{C_{1}}^{\infty}\times\mathcal{B}$
on $[T-\delta T,T]$, when $C_{1}C_{3}<e^{-\frac{1}{2}}$. In this
case we define the pair $\left(\tilde{Y},\tilde{Z}\right)=\Phi\left(Y,Z\right)$
on $[T-\delta T,T]$ and $\Phi:\mathcal{S}_{C_{1}}^{\infty}\times\mathcal{B}\rightarrow\mathcal{S}_{C_{1}}^{\infty}\times\mathcal{B}$
is well defined.

In order to get a result about the global existence of solution, we
firstly consider the time interval $[T-\delta T,T]$ for some $\delta\in\left(0,1\right)$
and try to find a contraction map on a closed subspace of $\mathcal{S}_{C_{1}}^{\infty}\times\mathcal{B}$.
This approach is inspired by the method used in Tevzadze \cite{key-12}.
Then by working backwards with respect to time intervals of length
$\delta T$, we can get our result by pasting time together.
\begin{thm}
\label{thm:There-exits-}Under the above assumptions \textup{$\left(\mathcal{A}1\right)$}
and \textup{$\left(\mathcal{A}2\right)$} on $f$ and $\xi$. If\textup{
$C_{1}C_{3}<e^{-144}$}, then for any terminal time T, there exists
a positive constant $\tilde{R}=\lceil\frac{1}{\delta}\rceil R$ with
\textup{$R=2C_{6}$} and some fixed constant $\delta\in\left(0,1\right)$
such that the BSDE\textup{ }
\begin{align}
\begin{cases}
dY_{t} & =Z_{t}f\left(Y_{t},Z_{t}\right)dt+Z_{t}dW_{t}\\
Y_{T} & =\xi
\end{cases}\label{eq:4.1-1}
\end{align}
has a solution pair\textup{ $\left(Y,Z\right)\in\mathcal{S}_{C_{1}}^{\infty}\left(\mbox{\ensuremath{\boldsymbol{R}}}^{d}\right)\times\mathcal{B}_{\sqrt{\tilde{R}}}\left(\mbox{\ensuremath{\boldsymbol{R}}}^{d\times k}\right)$}
on $\left[0,T\right]$.
\end{thm}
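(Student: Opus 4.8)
The plan is to construct the solution on $[0,T]$ by concatenating solutions obtained on the short intervals $[T-j\delta T,T-(j-1)\delta T]$, $j=1,\dots,\lceil 1/\delta\rceil$ (the last one possibly truncated at $0$, which only improves all the estimates below since these depend on the interval through its length only), and to produce the solution on each such interval as the fixed point of the map $\Phi$, restricted to $M:=\mathcal{S}_{C_{1}}^{\infty}\times\mathcal{B}_{\sqrt{R}}$ with $R=2C_{6}$, endowed with the product norm of $\mathcal{S}^{\infty}\times\mathcal{B}$. Proposition \ref{thm:If--then} already shows $\Phi(M)\subseteq M$: if $\left\Vert Z\right\Vert _{\mathcal{B}}^{2}\leq R$ then $\left\Vert \tilde{Z}\right\Vert _{\mathcal{B}}^{2}\leq C_{6}+\tfrac{1}{2}R=R$, while $\left\Vert \tilde{Y}\right\Vert _{\mathcal{S}^{\infty}}\leq C_{1}$ by the construction \eqref{eq:5.2}; and $M$, being closed in the Banach space $\mathcal{S}^{\infty}\times\mathcal{B}$, is a complete metric space.

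The core of the argument is to show that, for $\delta$ small enough, $\Phi$ is a strict contraction on $M$. For $(Y^{1},Z^{1}),(Y^{2},Z^{2})\in M$ with images $(\tilde Y^{i},\tilde Z^{i})=\Phi(Y^{i},Z^{i})$, writing $\tilde Z^{1}f(Y^{1},Z^{1})-\tilde Z^{2}f(Y^{2},Z^{2})=(\tilde Z^{1}-\tilde Z^{2})f(Y^{1},Z^{1})+\tilde Z^{2}(f(Y^{1},Z^{1})-f(Y^{2},Z^{2}))$ and using the $\mathbb{Q}^{1}$-Brownian motion $W^{\mathbb{Q}^{1}}$ associated with $(Y^{1},Z^{1})$, the difference $\tilde Y^{1}-\tilde Y^{2}$, which has vanishing terminal value, solves
\[
d(\tilde Y^{1}_{t}-\tilde Y^{2}_{t})=(\tilde Z^{1}_{t}-\tilde Z^{2}_{t})\,dW^{\mathbb{Q}^{1}}_{t}+\tilde Z^{2}_{t}\big(f(Y^{1}_{t},Z^{1}_{t})-f(Y^{2}_{t},Z^{2}_{t})\big)\,dt .
\]
Passing to $W^{\mathbb{Q}^{1}}$ is essential: it removes the only $L^{2}$-bounded (not bounded) drift $f(Y^{1},Z^{1})$, leaving a source term that is genuinely a difference. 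Taking $\mathbb{Q}^{1}$-conditional expectations, applying the Lipschitz bound together with Cauchy--Schwarz under $\mathbb{Q}^{1}$, and controlling the $\mathbb{Q}^{1}$-energy integrals of $\tilde Z^{2}$ and of $Z^{1}-Z^{2}$ by their $\mathbb{P}$-BMO norms through Girsanov and Kazamaki's reverse Hölder inequality, one obtains $\left\Vert \tilde Y^{1}-\tilde Y^{2}\right\Vert _{\mathcal{S}^{\infty}}^{2}\leq A_{1}\,\delta T\,\left\Vert Y^{1}-Y^{2}\right\Vert _{\mathcal{S}^{\infty}}^{2}+A_{2}\left\Vert Z^{1}-Z^{2}\right\Vert _{\mathcal{B}}^{2}$. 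Applying Itô's formula to $\left\Vert \tilde Y^{1}-\tilde Y^{2}\right\Vert ^{2}$ under $\mathbb{Q}^{1}$ (no exponential weight being needed, the terminal value being zero) and treating the cross term $2(\tilde Y^{1}-\tilde Y^{2})^{T}\tilde Z^{2}(f(Y^{1},Z^{1})-f(Y^{2},Z^{2}))$ in the same way, one gets, after optimising the Young parameter, $\left\Vert \tilde Z^{1}-\tilde Z^{2}\right\Vert _{\mathcal{B}}^{2}\leq A_{3}\,\delta T\,\left\Vert Y^{1}-Y^{2}\right\Vert _{\mathcal{S}^{\infty}}^{2}+A_{4}\left\Vert Z^{1}-Z^{2}\right\Vert _{\mathcal{B}}^{2}+A_{5}\left\Vert \tilde Y^{1}-\tilde Y^{2}\right\Vert _{\mathcal{S}^{\infty}}^{2}$; substituting the previous inequality and adding gives $\left\Vert (\tilde Y^{1},\tilde Z^{1})-(\tilde Y^{2},\tilde Z^{2})\right\Vert ^{2}\leq\kappa\,\left\Vert (Y^{1},Z^{1})-(Y^{2},Z^{2})\right\Vert ^{2}$. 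The decisive point is that, up to reverse Hölder factors that are close to $1$ (for $\delta$ and $C_{1}C_{3}$ small), the coefficient of $\left\Vert Z^{1}-Z^{2}\right\Vert _{\mathcal{B}}^{2}$ is of the order of $RC_{3}^{2}=\frac{1}{-\ln(C_{1}C_{3})}+\frac{2\alpha(C_{1}C_{2}+C_{4})\delta T}{C_{1}^{2}}$ (recall $e^{KC_{1}^{2}}=(C_{1}C_{3})^{-2}$), which is made arbitrarily small by the hypothesis $C_{1}C_{3}<e^{-324}$ together with a small choice of $\delta$, while the coefficients of $\left\Vert Y^{1}-Y^{2}\right\Vert _{\mathcal{S}^{\infty}}^{2}$ all carry a factor $\delta T$ and are likewise absorbed by shrinking $\delta$. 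Fixing such a $\delta\in(0,1)$ gives $\kappa<1$.

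By the Banach fixed point theorem, $\Phi$ then has a fixed point $(Y,Z)\in M$ on $[T-\delta T,T]$, which by construction solves \eqref{eq:4.1-1} there with $\left\Vert Y\right\Vert _{\mathcal{S}^{\infty}}\leq C_{1}$ and $\left\Vert Z\right\Vert _{\mathcal{B}}^{2}\leq R$. Since $Y_{T-\delta T}$ is $\mathcal{F}_{T-\delta T}$-measurable with $\left\Vert Y_{T-\delta T}\right\Vert \leq C_{1}$, the same construction applies on $[T-2\delta T,T-\delta T]$ with terminal datum $Y_{T-\delta T}$, and so on; after $n=\lceil 1/\delta\rceil$ steps the whole interval $[0,T]$ is covered. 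Pasting the pieces yields a continuous pair $(Y,Z)$ on $[0,T]$ satisfying the integral form of \eqref{eq:4.1-1} on each subinterval, hence on $[0,T]$, with $\left\Vert Y\right\Vert _{\mathcal{S}^{\infty}}\leq C_{1}$. Finally, for every $t\in[0,T]$, splitting $\int_{t}^{T}\left\Vert Z_{s}\right\Vert ^{2}ds$ over the subintervals and using the tower property at their left endpoints with the per-interval bound $\left\Vert Z\right\Vert _{\mathcal{B}}^{2}\leq R$ gives $\boldsymbol{E}^{\mathcal{F}_{t}}\big(\int_{t}^{T}\left\Vert Z_{s}\right\Vert ^{2}ds\big)\leq nR=\tilde R$, so $Z\in\mathcal{B}_{\sqrt{\tilde{R}}}(\boldsymbol{R}^{d\times k})$, which completes the proof.

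The step I expect to be the main obstacle is the contraction estimate in the second paragraph, and within it the uniform control over the whole ball $M$ of the constants produced by the change of measure $\mathbb{P}\to\mathbb{Q}^{1}$. One must keep the $\mathbb{P}$-BMO norm of $\int f(Y^{1},Z^{1})^{T}dW$ — which is bounded by $2(C_{1}C_{2}+C_{4})^{2}\delta T+2RC_{3}^{2}$, with $RC_{3}^{2}$ essentially equal to $1/(-\ln(C_{1}C_{3}))$ for $\delta$ small — small enough that the reverse Hölder constants stay close to $1$, and it is exactly this (together with the requirement that the $\left\Vert Z^{1}-Z^{2}\right\Vert _{\mathcal{B}}^{2}$-coefficient be $<1$) that forces the quantitative, admittedly very crude, smallness $C_{1}C_{3}<e^{-324}$ and the choice of a sufficiently small but fixed $\delta$.
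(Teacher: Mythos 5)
Your overall architecture coincides with the paper's: the same iteration map $\Phi$, the same invariant ball $\mathcal{S}_{C_{1}}^{\infty}\times\mathcal{B}_{\sqrt{R}}$ with $R=2C_{6}$ obtained from Proposition \ref{thm:If--then}, a contraction on $[T-\delta T,T]$, and backward pasting with the energy bound $\tilde{R}=\lceil\frac{1}{\delta}\rceil R$. Where you genuinely diverge is in how the contraction estimate is executed. The paper never changes measure there: it applies Ito's formula to $\Vert\tilde{Y}^{1}-\tilde{Y}^{2}\Vert^{2}$ under $\mathbb{P}$, keeps the drift term $2\vartheta^{T}f(Y^{1},Z^{1})$, and controls it by Cauchy--Schwarz against the a priori bound $\Vert Z^{1}\Vert_{\mathcal{B}}\leq\sqrt{R}$; this yields the coupled inequalities (\ref{eq:5.6-1-1})--(\ref{eq:5.7-1-1}) in $\Vert\tilde{\triangle}\Vert_{\mathcal{S}^{\infty}},\Vert\tilde{\Lambda}\Vert_{\mathcal{B}},\Vert\triangle\Vert_{\mathcal{S}^{\infty}},\Vert\Lambda\Vert_{\mathcal{B}}$, whose closure requires exactly $2C_{3}\sqrt{R}\leq\lambda<\frac{1}{9}$, i.e. $C_{1}C_{3}<e^{-4/\lambda^{2}}=e^{-324}$ --- every constant is elementary and no reverse H\"older inequality is needed anywhere. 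You instead pass to $W^{\mathbb{Q}^{1}}$ to absorb $f(Y^{1},Z^{1})$ into the martingale part, at the price of translating energy integrals back and forth between $\mathbb{P}$ and $\mathbb{Q}^{1}$ via Kazamaki's reverse H\"older estimates, uniformly over the ball. That route is standard (closer in spirit to Tevzadze and Hu--Tang) and should close, since the relevant BMO norm is of order $\sqrt{2/(-\ln(C_{1}C_{3}))}$ and hence small; what it buys is a cleaner difference equation with only genuine difference terms on the right, what it costs is a family of reverse H\"older constants that you have not actually computed. In particular your argument, as written, establishes $\kappa<1$ only ``for $C_{1}C_{3}$ and $\delta$ small enough'' and does not verify that the specific threshold $e^{-324}$ in the statement survives once those constants are inserted; the paper's measure-free estimate is precisely what makes $e^{-324}$ drop out explicitly. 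The remaining pieces --- invariance of the ball, completeness of $M$, the pasting, and the bound $\boldsymbol{E}^{\mathcal{F}_{t}}\left(\int_{t}^{T}\Vert Z_{s}\Vert^{2}ds\right)\leq\lceil\frac{1}{\delta}\rceil R$ via the tower property --- agree with the paper, and your tower-property justification of the factor $\lceil\frac{1}{\delta}\rceil$ is in fact more explicit than the paper's.
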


\begin{proof}
Let $\lambda\in\left(0,1\right)$ be a positive constant to be determined
later. We firstly consider the time interval $\left[T-\delta T,T\right]$
as above and assume that

\begin{equation}
C_{1}C_{3}<e^{-\frac{4}{\lambda^{2}}}\label{eq:3.8}
\end{equation}
which implies that $C_{1}C_{3}<e^{-\frac{1}{2}}$ and we set constant
$R$ to be 
\begin{equation}
R=2C_{6}.\label{eq:3.9}
\end{equation}
We may have the following by choosing $\delta$ small enough.
\begin{equation}
2\left(C_{1}C_{2}+C_{4}\right)\sqrt{\delta T}\vee2C_{2}\sqrt{\delta T}\sqrt{R}\vee2C_{3}\sqrt{R}\leq\lambda.\label{eq:5.9}
\end{equation}
We can do this because we have condition (\ref{eq:3.8}) and in equation
(\ref{eq:5.4}):
\[
C_{6}=\frac{1}{C_{1}^{2}C_{3}^{2}}\left[\frac{C_{1}^{2}}{-2\ln\left(C_{1}C_{3}\right)}+\alpha\left(C_{1}C_{2}+C_{4}\right)\delta T\right]
\]
which implies that
\[
C_{3}^{2}C_{6}=\frac{1}{C_{1}^{2}}\left[\frac{C_{1}^{2}}{-2\ln\left(C_{1}C_{3}\right)}+\alpha\left(C_{1}C_{2}+C_{4}\right)\delta T\right],
\]
where $\alpha$ is determined by inequality (\ref{eq:5.3}): 
\[
2K-\frac{C_{3}}{\beta}-\frac{\left(C_{1}C_{2}+C_{4}\right)}{\alpha}\geq0
\]
which can be achieved when $C_{1}C_{3}<e^{-\frac{1}{2}}$. 

Let $\mathcal{S}_{C_{1}}^{\infty}\times\mathcal{B}_{\sqrt{R}}$ denote
the space $\mathcal{S}_{C_{1}}^{\infty}\left(\mbox{\ensuremath{\boldsymbol{R}}}^{d}\right)\times\mathcal{B}_{\sqrt{R}}\left(\mbox{\ensuremath{\boldsymbol{R}}}^{d\times k}\right)$.
Since $C_{1}C_{3}<e^{-\frac{1}{2}}$ which is due to condition (\ref{eq:3.8}),
we have $\Phi:\mathcal{S}_{C_{1}}^{\infty}\times\mathcal{B}\rightarrow\mathcal{S}_{C_{1}}^{\infty}\times\mathcal{B}$
as defined above. Then for any pair $\left(Y,Z\right)\in\mathcal{S}_{C_{1}}^{\infty}\times\mathcal{B}_{\sqrt{R}}$
we can get $\left(\tilde{Y},\tilde{Z}\right)=\Phi\left(Y,Z\right)$
with $\left(\tilde{Y},\tilde{Z}\right)\in\mathcal{S}_{C_{1}}^{\infty}\times\mathcal{B}$.
By Proposition \ref{thm:If--then} we have that 
\[
\left\Vert \tilde{Z}\right\Vert _{\mathcal{B}}^{2}\leq C_{6}+\frac{1}{2}\left\Vert Z\right\Vert _{\mathcal{B}}^{2}.
\]
Together with condition (\ref{eq:3.9}) we get that 
\[
\left\Vert \tilde{Z}\right\Vert _{\mathcal{B}}^{2}\leq\frac{R}{2}+\frac{1}{2}\left\Vert Z\right\Vert _{\mathcal{B}}^{2}\leq R,
\]
which implies that $\left(\tilde{Y},\tilde{Z}\right)\in\mathcal{S}_{C_{1}}^{\infty}\times\mathcal{B}_{\sqrt{R}}$.
So that $\Phi:\mathcal{S}_{C_{1}}^{\infty}\times\mathcal{B}_{\sqrt{R}}\rightarrow\mathcal{S}_{C_{1}}^{\infty}\times\mathcal{B}_{\sqrt{R}}$
is well defined.

For any $\left(Y^{1},Z^{1}\right),\left(Y^{2},Z^{2}\right)\in\mathcal{S}_{C_{1}}^{\infty}\times\mathcal{B}_{\sqrt{R}}$,
we set $\left(\tilde{Y^{1}},\tilde{Z^{1}}\right)=\Phi\left(Y^{1},Z^{1}\right)$
and $\left(\tilde{Y^{2}},\tilde{Z^{2}}\right)=\Phi\left(Y^{2},Z^{2}\right)$.
So we have $\left(\tilde{Y^{1}},\tilde{Z^{1}}\right),\left(\tilde{Y^{2}},\tilde{Z^{2}}\right)\in\mathcal{S}_{C_{1}}^{\infty}\times\mathcal{B}_{\sqrt{R}}$.
Then by setting 
\[
\triangle=Y^{1}-Y^{2},\;\tilde{\triangle}=\tilde{Y^{1}}-\tilde{Y^{2}},\;\Lambda=Z^{1}-Z^{2},\;\tilde{\Lambda}=\tilde{Z^{1}}-\tilde{Z^{2}},
\]
we get that $\left(\triangle,\Lambda\right),\left(\tilde{\triangle},\tilde{\Lambda}\right)\in\mathcal{S}^{\infty}\left(\mbox{\ensuremath{\boldsymbol{R}}}^{d}\right)\times\mathcal{B}\left(\mbox{\ensuremath{\boldsymbol{R}}}^{d\times k}\right)$
with $\tilde{\triangle}_{T}=$0 and we also get 
\begin{align*}
d\tilde{\triangle}^{i} & =\underset{j}{\sum}\tilde{\Lambda}^{i,j}dW^{j}+\underset{j}{\sum}f_{j}\left(Y^{1},Z^{1}\right)\tilde{\Lambda}^{i,j}dt\\
 & +\underset{j}{\sum}\left[f_{j}\left(Y^{1},Z^{1}\right)-f_{j}\left(Y^{2},Z^{2}\right)\right]\tilde{Z^{2}}^{i,j}dt.
\end{align*}
Then by Itô's formula we have
\begin{align}
d\left\Vert \tilde{\triangle}\right\Vert ^{2} & =2\vartheta^{T}f\left(Y^{1},Z^{1}\right)dt+2\rho^{T}\left[f\left(Y^{1},Z^{1}\right)-f\left(Y^{2},Z^{2}\right)\right]dt\nonumber \\
 & +\left\Vert \tilde{\Lambda}\right\Vert ^{2}dt+2\vartheta^{T}dW,\label{eq:2.3.15}
\end{align}
where the components of vectors $\vartheta$ and $\rho$ are defined
as 
\begin{eqnarray*}
\vartheta^{j} & = & \underset{i}{\sum}\tilde{\triangle}^{i}\tilde{\Lambda}^{i,j},\quad\rho^{j}=\underset{i}{\sum}\tilde{\triangle}^{i}\tilde{Z^{2}}^{i,j},
\end{eqnarray*}
and $\int\vartheta^{T}dW$ is a martingale by the boundedness of $\tilde{\triangle}$
and the fact that $\tilde{\Lambda}\in\mathcal{B}\left(\mbox{\ensuremath{\boldsymbol{R}}}^{d\times k}\right)$.
Then by taking conditional expectation we get
\begin{align*}
\left\Vert \tilde{\triangle}_{t}\right\Vert ^{2}+\boldsymbol{E}^{\mathcal{F}_{t}}\left[\int_{t}^{T}\left\Vert \tilde{\Lambda}\right\Vert ^{2}ds\right] & =-2\boldsymbol{E}^{\mathcal{F}_{t}}\left[\int_{t}^{T}\vartheta^{T}f\left(Y^{1},Z^{1}\right)ds\right]\\
 & -2\boldsymbol{E}^{\mathcal{F}_{t}}\left[\int_{t}^{T}\rho^{T}\left[f\left(Y^{1},Z^{1}\right)-f\left(Y^{2},Z^{2}\right)\right]ds\right],
\end{align*}
which implies that 
\begin{align*}
\left\Vert \tilde{\triangle}_{t}\right\Vert ^{2}+\boldsymbol{E}^{\mathcal{F}_{t}}\left[\int_{t}^{T}\left\Vert \tilde{\Lambda}\right\Vert ^{2}ds\right] & \leq2\boldsymbol{E}^{\mathcal{F}_{t}}\left[\int_{t}^{T}\left\Vert \vartheta\right\Vert \left\Vert f\left(Y^{1},Z^{1}\right)\right\Vert ds\right]\\
 & +2\boldsymbol{E}^{\mathcal{F}_{t}}\left[\int_{t}^{T}\left\Vert \rho\right\Vert \left\Vert f\left(Y^{1},Z^{1}\right)-f\left(Y^{2},Z^{2}\right)\right\Vert ds\right].
\end{align*}
Together with the definition of $\vartheta$ and $\rho$, we obtain
from the inequality above that 
\begin{align*}
\left\Vert \tilde{\triangle}_{t}\right\Vert ^{2}+\boldsymbol{E}^{\mathcal{F}_{t}}\left[\int_{t}^{T}\left\Vert \tilde{\Lambda}\right\Vert ^{2}ds\right] & \leq2\boldsymbol{E}^{\mathcal{F}_{t}}\left[\int_{t}^{T}\left\Vert \tilde{\triangle}\right\Vert \left\Vert \tilde{\Lambda}\right\Vert \left\Vert f\left(Y^{1},Z^{1}\right)\right\Vert ds\right]\\
 & +2\boldsymbol{E}^{\mathcal{F}_{t}}\left[\int_{t}^{T}\left\Vert \tilde{\triangle}\right\Vert \left\Vert \tilde{Z^{2}}\right\Vert \left\Vert f\left(Y^{1},Z^{1}\right)-f\left(Y^{2},Z^{2}\right)\right\Vert ds\right]
\end{align*}
for all $t\in\left[T-\delta T,T\right]$. 

Now by using the assumptions on $f$, we conclude that 
\begin{align*}
\left\Vert \tilde{\triangle}_{t}\right\Vert ^{2}+\boldsymbol{E}^{\mathcal{F}_{t}}\left[\int_{t}^{T}\left\Vert \tilde{\Lambda}_{s}\right\Vert ^{2}ds\right]\leq & 2\boldsymbol{E}^{\mathcal{F}_{t}}\left[\int_{t}^{T}\left[\left(C_{1}C_{2}+C_{4}\right)+C_{3}\left\Vert Z_{s}^{1}\right\Vert \right]\left\Vert \tilde{\triangle}_{s}\right\Vert \left\Vert \tilde{\Lambda}_{s}\right\Vert ds\right]\\
 & +2\boldsymbol{E}^{\mathcal{F}_{t}}\left[\int_{t}^{T}\left(C_{2}\left\Vert \triangle_{s}\right\Vert +C_{3}\left\Vert \Lambda_{s}\right\Vert \right)\left\Vert \tilde{\triangle}_{s}\right\Vert \left\Vert \tilde{Z_{s}^{2}}\right\Vert ds\right]\\
\leq & 2\left(C_{1}C_{2}+C_{4}\right)\sqrt{\delta T}\left\Vert \tilde{\triangle}\right\Vert _{\mathcal{S}^{\infty}}\left\Vert \tilde{\Lambda}\right\Vert _{\mathcal{B}}+2C_{3}\left\Vert Z^{1}\right\Vert _{\mathcal{B}}\left\Vert \tilde{\triangle}\right\Vert _{\mathcal{S}^{\infty}}\left\Vert \tilde{\Lambda}\right\Vert _{\mathcal{B}}\\
 & +2C_{2}\sqrt{\delta T}\left\Vert \tilde{Z^{2}}\right\Vert _{\mathcal{B}}\left\Vert \tilde{\triangle}\right\Vert _{\mathcal{S}^{\infty}}\left\Vert \triangle\right\Vert _{\mathcal{S}^{\infty}}+2C_{3}\left\Vert \tilde{Z^{2}}\right\Vert _{\mathcal{B}}\left\Vert \tilde{\triangle}\right\Vert _{\mathcal{S}^{\infty}}\left\Vert \Lambda\right\Vert _{\mathcal{B}},
\end{align*}
from which we deduce that 
\begin{align}
\left\Vert \tilde{\triangle}\right\Vert _{\mathcal{S}^{\infty}} & \leq2\left(C_{1}C_{2}+C_{4}\right)\sqrt{\delta T}\left\Vert \tilde{\Lambda}\right\Vert _{\mathcal{B}}+2C_{3}\left\Vert Z^{1}\right\Vert _{\mathcal{B}}\left\Vert \tilde{\Lambda}\right\Vert _{\mathcal{B}}\label{eq:5.6}\\
 & +2C_{2}\sqrt{\delta T}\left\Vert \tilde{Z^{2}}\right\Vert _{\mathcal{B}}\left\Vert \triangle\right\Vert _{\mathcal{S}^{\infty}}+2C_{3}\left\Vert \tilde{Z^{2}}\right\Vert _{\mathcal{B}}\left\Vert \Lambda\right\Vert _{\mathcal{B}},\nonumber 
\end{align}
and 
\begin{align}
\left\Vert \tilde{\Lambda}\right\Vert _{\mathcal{B}}^{2} & \leq2\left(C_{1}C_{2}+C_{4}\right)\sqrt{\delta T}\left\Vert \tilde{\triangle}\right\Vert _{\mathcal{S}^{\infty}}\left\Vert \tilde{\Lambda}\right\Vert _{\mathcal{B}}+2C_{3}\left\Vert Z^{1}\right\Vert _{\mathcal{B}}\left\Vert \tilde{\triangle}\right\Vert _{\mathcal{S}^{\infty}}\left\Vert \tilde{\Lambda}\right\Vert _{\mathcal{B}}\label{eq:5.7}\\
 & +2C_{2}\sqrt{\delta T}\left\Vert \tilde{Z^{2}}\right\Vert _{\mathcal{B}}\left\Vert \tilde{\triangle}\right\Vert _{\mathcal{S}^{\infty}}\left\Vert \triangle\right\Vert _{\mathcal{S}^{\infty}}+2C_{3}\left\Vert \tilde{Z^{2}}\right\Vert _{\mathcal{B}}\left\Vert \tilde{\triangle}\right\Vert _{\mathcal{S}^{\infty}}\left\Vert \Lambda\right\Vert _{\mathcal{B}}.\nonumber 
\end{align}
Thus we have that 
\begin{align}
\left\Vert \tilde{\triangle}\right\Vert _{\mathcal{S}^{\infty}} & \leq2\left(C_{1}C_{2}+C_{4}\right)\sqrt{\delta T}\left\Vert \tilde{\Lambda}\right\Vert _{\mathcal{B}}+2C_{3}\sqrt{R}\left\Vert \tilde{\Lambda}\right\Vert _{\mathcal{B}}\label{eq:5.6-1}\\
 & +2C_{2}\sqrt{\delta T}\sqrt{R}\left\Vert \triangle\right\Vert _{\mathcal{S}^{\infty}}+2C_{3}\sqrt{R}\left\Vert \Lambda\right\Vert _{\mathcal{B}},\nonumber 
\end{align}
and  
\begin{align}
\left\Vert \tilde{\Lambda}\right\Vert _{\mathcal{B}}^{2} & \leq2\left(C_{1}C_{2}+C_{4}\right)\sqrt{\delta T}\left\Vert \tilde{\triangle}\right\Vert _{\mathcal{S}^{\infty}}\left\Vert \tilde{\Lambda}\right\Vert _{\mathcal{B}}+2C_{3}\sqrt{R}\left\Vert \tilde{\triangle}\right\Vert _{\mathcal{S}^{\infty}}\left\Vert \tilde{\Lambda}\right\Vert _{\mathcal{B}}\label{eq:5.7-1}\\
 & +2C_{2}\sqrt{\delta T}\sqrt{R}\left\Vert \tilde{\triangle}\right\Vert _{\mathcal{S}^{\infty}}\left\Vert \triangle\right\Vert _{\mathcal{S}^{\infty}}+2C_{3}\sqrt{R}\left\Vert \tilde{\triangle}\right\Vert _{\mathcal{S}^{\infty}}\left\Vert \Lambda\right\Vert _{\mathcal{B}}.\nonumber 
\end{align}
Then by condition (\ref{eq:5.9}) we get that 
\begin{align}
\left\Vert \tilde{\triangle}\right\Vert _{\mathcal{S}^{\infty}} & \leq\lambda\left(2\left\Vert \tilde{\Lambda}\right\Vert _{\mathcal{B}}+\left\Vert \triangle\right\Vert _{\mathcal{S}^{\infty}}+\left\Vert \Lambda\right\Vert _{\mathcal{B}}\right)\label{eq:5.6-1-1}
\end{align}
and 
\begin{align}
\left\Vert \tilde{\Lambda}\right\Vert _{\mathcal{B}}^{2} & \leq\lambda\left\Vert \tilde{\triangle}\right\Vert _{\mathcal{S}^{\infty}}\left(2\left\Vert \tilde{\Lambda}\right\Vert _{\mathcal{B}}+\left\Vert \triangle\right\Vert _{\mathcal{S}^{\infty}}+\left\Vert \Lambda\right\Vert _{\mathcal{B}}\right).\label{eq:5.7-1-1}
\end{align}
So by substituting $\left\Vert \tilde{\triangle}\right\Vert _{\mathcal{S}^{\infty}}$
in (\ref{eq:5.7-1-1}) with (\ref{eq:5.6-1-1}) we have that
\begin{align*}
\left\Vert \tilde{\Lambda}\right\Vert _{\mathcal{B}} & \leq\lambda\left(2\left\Vert \tilde{\Lambda}\right\Vert _{\mathcal{B}}+\left\Vert \triangle\right\Vert _{\mathcal{S}^{\infty}}+\left\Vert \Lambda\right\Vert _{\mathcal{B}}\right).
\end{align*}
By combining with (\ref{eq:5.6-1-1}) we deduce that 
\begin{align*}
\left\Vert \tilde{\triangle}\right\Vert _{\mathcal{S}^{\infty}}+\left\Vert \tilde{\Lambda}\right\Vert _{\mathcal{B}} & \leq4\lambda\left\Vert \tilde{\Lambda}\right\Vert _{\mathcal{B}}+2\lambda\left(\left\Vert \triangle\right\Vert _{\mathcal{S}^{\infty}}+\left\Vert \Lambda\right\Vert _{\mathcal{B}}\right).
\end{align*}
If $\lambda<\frac{1}{4}$ we have that 
\begin{align}
\left\Vert \tilde{\triangle}\right\Vert _{\mathcal{S}^{\infty}}+\left\Vert \tilde{\Lambda}\right\Vert _{\mathcal{B}} & \leq\frac{2\lambda}{\left(1-4\lambda\right)}\left(\left\Vert \triangle\right\Vert _{\mathcal{S}^{\infty}}+\left\Vert \Lambda\right\Vert _{\mathcal{B}}\right).\label{eq:2.3.21}
\end{align}
If we can choose $\lambda<\frac{1}{6}$ so that $\frac{2\lambda}{\left(1-4\lambda\right)}<1$.
Then $\Phi:\mathcal{S}_{C_{1}}^{\infty}\times\mathcal{B}_{\sqrt{R}}\rightarrow\mathcal{S}_{C_{1}}^{\infty}\times\mathcal{B}_{\sqrt{R}}$
is a contraction map. By the Banach's fixed point theorem, we deduce
that there exists a unique solution pair $\left(Y,Z\right)\in\mathcal{S}_{C_{1}}^{\infty}\times\mathcal{B}_{\sqrt{R}}$
on the time interval $\left[T-\delta T,T\right]$ to the BSDE (\ref{eq:4.1-1})
restricted on $\mathcal{S}_{C_{1}}^{\infty}\times\mathcal{B}_{\sqrt{R}}$.
Therefore what left to be shown is that there exists $\lambda<\frac{1}{6}$
such that assumption (\ref{eq:3.8}) holds, and this can be achieved
when $C_{1}C_{3}<e^{-\frac{4}{\left(\frac{1}{6}\right)^{2}}}=e^{-144}$.

We then consider the time interval $\left[T-2\delta T,T-\delta T\right]$
if $T-2\delta T>0$ and $\left[0,T-\delta T\right]$ otherwise, and
set terminal value $\xi$ at time $T-\delta T$ to be $Y_{T-\delta T}$
which is the initial value of the solution $Y$ solved above on the
time interval $\left[T-\delta T,T\right]$. Then by using the above
same method, we get a unique solution pair in $\mathcal{S}_{C_{1}}^{\infty}\times\mathcal{B}_{\sqrt{R}}$
on the time interval $\left[T-2\delta T,T-\delta T\right]$ to the
BSDE (\ref{eq:4.1-1}). By repeating this procedure backwards and
pasting the solutions on all the time intervals together we get a
solution pair $\left(Y,Z\right)\in\mathcal{S}_{C_{1}}^{\infty}\left(\mbox{\ensuremath{\boldsymbol{R}}}^{d}\right)\times\mathcal{B}_{\sqrt{\tilde{R}}}\left(\mbox{\ensuremath{\boldsymbol{R}}}^{d\times k}\right)$
with $\tilde{R}=\lceil\frac{1}{\delta}\rceil R$ on the time interval
$\left[0,T\right]$ to the BSDE (\ref{eq:4.1-1}).

If $\delta$ and $\lambda$ satisfy the condition that $\sqrt{\lceil\frac{1}{\delta}\rceil}\lambda<\frac{1}{6}$,
which may be achievable when $T$ and $C_{1}C_{3}$ are small enough.
Then the solution pair $\left(Y,Z\right)\in\mathcal{S}_{C_{1}}^{\infty}\left(\mbox{\ensuremath{\boldsymbol{R}}}^{d}\right)\times\mathcal{B}_{\sqrt{\tilde{R}}}\left(\mbox{\ensuremath{\boldsymbol{R}}}^{d\times k}\right)$
on the time interval $\left[0,T\right]$ to the BSDE (\ref{eq:4.1-1})
is unique. This uniqueness of the solution can be proved as follows.
Suppose there exist two pairs of solutions $\left(Y^{1},Z^{1}\right),\left(Y^{2},Z^{2}\right)\in\mathcal{S}_{C_{1}}^{\infty}\left(\mbox{\ensuremath{\boldsymbol{R}}}^{d}\right)\times\mathcal{B}_{\sqrt{\tilde{R}}}\left(\mbox{\ensuremath{\boldsymbol{R}}}^{d\times k}\right)$
on the time interval $\left[0,T\right]$ to the BSDE (\ref{eq:4.1-1}).
By setting
\[
\triangle=Y^{1}-Y^{2},\Lambda=Z^{1}-Z^{2},
\]
we get that $\left(\triangle,\Lambda\right)\in\mathcal{S}^{\infty}\left(\mbox{\ensuremath{\boldsymbol{R}}}^{d}\right)\times\mathcal{B}\left(\mbox{\ensuremath{\boldsymbol{R}}}^{d\times k}\right)$
with $\triangle_{T}=$0. Then on the time interval $\left[T-\delta T,T\right]$
by repeating the procedure starting from equation (\ref{eq:2.3.15}),
we obtain an inequality which is similar to inequality (\ref{eq:2.3.21})
as follows:
\[
\left\Vert \triangle\right\Vert _{\mathcal{S}^{\infty}}+\left\Vert \Lambda\right\Vert _{\mathcal{B}}\leq\frac{2\sqrt{\lceil\frac{1}{\delta}\rceil}\lambda}{\left(1-4\sqrt{\lceil\frac{1}{\delta}\rceil}\lambda\right)}\left(\left\Vert \triangle\right\Vert _{\mathcal{S}^{\infty}}+\left\Vert \Lambda\right\Vert _{\mathcal{B}}\right).
\]
Since $\lambda<\frac{1}{6\sqrt{\lceil\frac{1}{\delta}\rceil}}$ so
that $\frac{2\sqrt{\lceil\frac{1}{\delta}\rceil}\lambda}{\left(1-4\sqrt{\lceil\frac{1}{\delta}\rceil}\lambda\right)}<1$,
we deduce that $\left\Vert \triangle\right\Vert _{\mathcal{S}^{\infty}}=0$
and $\left\Vert \Lambda\right\Vert _{\mathcal{B}}=0$. Thus $\left(Y^{1},Z^{1}\right)$
equals $\left(Y^{2},Z^{2}\right)$ on the time interval $\left[T-\delta T,T\right]$,
in particular $Y_{T-\delta T}^{1}$ equals $Y_{T-\delta T}^{2}$.
We then consider the time interval $\left[T-2\delta T,T-\delta T\right]$
if $T-2\delta T>0$ and $\left[0,T-\delta T\right]$ otherwise, and
terminal values at time $T-\delta T$ are $Y_{T-\delta T}^{1}$ and
$Y_{T-\delta T}^{2}$ respectively for the two solutions. Again by
using the same procedure starting from equation (\ref{eq:2.3.15}),
we deduce that $\left(Y^{1},Z^{1}\right)$ equals $\left(Y^{2},Z^{2}\right)$
on the time interval $\left[T-2\delta T,T-\delta T\right]$ as well.
Thus by repeating this procedure backwards, we conclude that $\left(Y^{1},Z^{1}\right)$
equals $\left(Y^{2},Z^{2}\right)$ on the time interval $\left[0,T\right]$.
\end{proof}
\begin{rem}
Theorem \ref{thm:There-exits-} says that, given parameters $C_{2},C_{3},C_{4}$,
if the bound $C_{1}$ of the terminal value is small enough then there
exists a solution pair $\left(Y,Z\right)\in\mathcal{S}^{\infty}\left(\mbox{\ensuremath{\boldsymbol{R}}}^{d}\right)\times\mathcal{B}\left(\mbox{\ensuremath{\boldsymbol{R}}}^{d\times k}\right)$
on the time interval $\left[0,T\right]$ to the BSDE (\ref{eq:4.1-1}).
\end{rem}

\begin{thm}
\label{thm:Suppose--where}Suppose $\left.\frac{d\mathbb{Q}}{d\mathbb{P}}\right|_{\mathcal{F}_{t}}={\cal E}\left(N\right)_{t}$
where $N\in$ \textup{BMO}$\left(\mathbb{P}\right)$, and $f$ and
$\xi$ satisfy the same above assumptions \textup{$\left(\mathcal{A}1\right)$}
and \textup{$\left(\mathcal{A}2\right)$} with\textup{ $C_{1}C_{3}<e^{-144}$},
then the BSDE\textup{ }
\begin{align}
\begin{cases}
dY_{t} & =Z_{t}f\left(Y_{t},Z_{t}\right)dt+Z_{t}dW_{t}^{\mathbb{Q}}\\
Y_{T} & =\xi
\end{cases}\label{eq:4.1-1-1}
\end{align}
where $W^{\mathbb{Q}}$ is a standard $k$-dimensional Brownian motion
under $\mathbb{Q}$ defined as
\[
dW_{t}^{\mathbb{Q}}=dW_{t}-d\left\langle W,N\right\rangle _{t},
\]
has a solution pair\textup{ $\left(Y,Z\right)\in\mathcal{S}^{\infty}\left(\mbox{\ensuremath{\boldsymbol{R}}}^{d}\right)\times\mathcal{B}\left(\mbox{\ensuremath{\boldsymbol{R}}}^{d\times k}\right)\left(\mathbb{P}\right)$}
on $\left[0,T\right]$.
\end{thm}

\begin{proof}
It can be seen clearly that the above proof also works under probability
measure $\mathbb{Q}$ with $W^{\mathbb{Q}}$ instead of probability
measure $\mathbb{P}$ with $W$. Thus there exists a solution pair
$\left(Y,Z\right)\in\mathcal{S}^{\infty}\left(\mbox{\ensuremath{\boldsymbol{R}}}^{d}\right)\times\mathcal{B}\left(\mbox{\ensuremath{\boldsymbol{R}}}^{d\times k}\right)\left(\mathbb{Q}\right)$
on the time interval $\left[0,T\right]$ to the BSDE (\ref{eq:4.1-1-1})
by Theorem \ref{thm:There-exits-}. It means that $\int Z_{s}dW_{s}^{\mathbb{Q}}\in$BMO$\left(\mathbb{Q}\right)$,
which implies that $\int Z_{s}dW_{s}\in$BMO$\left(\mathbb{P}\right)$
by Corollary \ref{cor:Assume-that-}. Thus we deduce that $Z\in\mathcal{B}\left(\mbox{\ensuremath{\boldsymbol{R}}}^{d\times k}\right)\left(\mathbb{\mathbb{P}}\right)$
and $\left(Y,Z\right)\in\mathcal{S}^{\infty}\left(\mbox{\ensuremath{\boldsymbol{R}}}^{d}\right)\times\mathcal{B}\left(\mbox{\ensuremath{\boldsymbol{R}}}^{d\times k}\right)\left(\mathbb{P}\right)$.
\end{proof}

\section{One dimensional case with bounded terminal values}

As an application of the results in the previous section, we prove
the existence of solution for the one dimensional case of our BSDE
with bounded terminal values by pasting space together and this approach
is also used in Tevzadze \cite{key-12}.

We consider the one dimensional case i.e. $d=1$.
\begin{lem}
\label{thm:Given-,-suppose}Given \textup{$\hat{Z}\in$$\mathcal{B}\left(\mbox{\ensuremath{\boldsymbol{R}}}^{1\times k}\right)\left(\mathbb{P}\right)$},
suppose $f$ and $\xi$ satisfy the above assumptions \textup{$\left(\mathcal{A}1\right)$}
and \textup{$\left(\mathcal{A}2\right)$ with} \textup{$C_{1}C_{3}<e^{-144}$,
}then the BSDE 
\begin{equation}
\begin{cases}
dY_{t} & =\left[\left(\hat{Z_{t}}+Z_{t}\right)f\left(\hat{Z_{t}}+Z_{t}\right)-\left(\hat{Z_{t}}\right)f\left(\hat{Z_{t}}\right)\right]dt+Z_{t}dW_{t}\\
Y_{T} & =\xi
\end{cases}\label{eq:4.1-2}
\end{equation}
has a solution pair\textup{ $\left(Y,Z\right)\in\mathcal{S}^{\infty}\left(\mbox{\ensuremath{\boldsymbol{R}}}\right)\times\mathcal{B}\left(\mbox{\ensuremath{\boldsymbol{R}}}^{1\times k}\right)\left(\mathbb{P}\right)$}
on $\left[0,T\right]$.
\end{lem}

\begin{proof}
We rearrange the terms to get
\begin{align*}
dY_{t} & =Z_{t}f\left(\hat{Z_{t}}+Z_{t}\right)dt+\hat{Z_{t}}\left[f\left(\hat{Z_{t}}+Z_{t}\right)-f\left(\hat{Z_{t}}\right)\right]dt+Z_{t}dW_{t}\\
 & =Z_{t}\left[f\left(\hat{Z_{t}}+Z_{t}\right)-f\left(\hat{Z_{t}}\right)\right]dt+Z_{t}f\left(\hat{Z_{t}}\right)dt+\hat{Z_{t}}\left[f\left(\hat{Z_{t}}+Z_{t}\right)-f\left(\hat{Z_{t}}\right)\right]dt+Z_{t}dW_{t}.
\end{align*}
For all $z\in\mbox{\ensuremath{\boldsymbol{R}}}^{1\times k}$ we define
$g$ as 
\[
g\left(z\right)=f\left(\hat{Z}+z\right)-f\left(\hat{Z}\right),
\]
then it can be verified directly that $g$ satisfies the above assumption
$\left(\mathcal{A}2\right)$ with the same parameter $C_{3}$ as that
of $f$ and we have 
\begin{align*}
dY_{t} & =Z_{t}g\left(Z_{t}\right)dt+Z_{t}f\left(\hat{Z_{t}}\right)dt+\hat{Z_{t}}\left[f\left(\hat{Z_{t}}+Z_{t}\right)-f\left(\hat{Z_{t}}\right)\right]dt+Z_{t}dW_{t}.
\end{align*}
By a similar argument used in Hu and Tang \cite{key-8}, for $i=1,2\cdots k$,
we can define a vector process $\beta\left(i\right)$ taking values
in $\mbox{\ensuremath{\boldsymbol{R}}}^{k\times1}$ with $\left\Vert \beta\left(i\right)\right\Vert ^{2}\leq kC_{3}^{2}$
such that 
\[
f_{i}\left(\hat{Z}+Z\right)-f_{i}\left(\hat{Z}\right)=Z\beta\left(i\right),
\]
where $f_{i}$ is the $i$th component of $f$. Then we may define
a process $\beta$ taking values in $\mbox{\ensuremath{\boldsymbol{R}}}^{k\times k}$
where the $i$th column of $\beta$ is $\beta\left(i\right)$ and
we deduce that $\left\Vert \beta\right\Vert ^{2}\leq k^{2}C_{3}^{2}$.
It implies that
\begin{equation}
\left[f\left(\hat{Z_{t}}+Z_{t}\right)-f\left(\hat{Z_{t}}\right)\right]=\left(Z_{t}\beta_{t}\right)^{T}.\label{eq:4.2}
\end{equation}
Thus we get 
\[
dY_{t}=Z_{t}g\left(Z_{t}\right)dt+Z_{t}f\left(\hat{Z_{t}}\right)dt+\hat{Z_{t}}\left(Z_{t}\beta_{t}\right)^{T}dt+Z_{t}dW_{t},
\]
which can be written as 
\begin{align*}
dY_{t} & =Z_{t}g\left(Z_{t}\right)dt+Z_{t}f\left(\hat{Z_{t}}\right)dt+Z_{t}\beta_{t}\left(\hat{Z_{t}}\right)^{T}dt+Z_{t}dW_{t}\\
 & =Z_{t}g\left(Z_{t}\right)dt+Z_{t}\left(\left[f\left(\hat{Z_{t}}\right)+\beta_{t}\left(\hat{Z_{t}}\right)^{T}\right]dt+dW_{t}\right)\\
 & =Z_{t}g\left(Z_{t}\right)dt+Z_{t}dW_{t}^{\mathbb{Q}}
\end{align*}
where the probability measure $\mathbb{Q}$ is defined by 
\begin{equation}
\left.\frac{d\mathbb{Q}}{d\mathbb{P}}\right|_{\mathcal{F}_{T}}={\cal E}\left(-\int\left[f\left(\hat{Z_{s}}\right)^{T}+\hat{Z_{s}}\beta_{s}^{T}\right]dW_{s}\right)_{T}
\end{equation}
and it can be verified that $-\int\left[f\left(\hat{Z_{s}}\right)^{T}+\hat{Z_{s}}\beta_{s}^{T}\right]dW_{s}\in$BMO$\left(\mathbb{P}\right)$
as $\hat{Z}\in$$\mathcal{B}\left(\mbox{\ensuremath{\boldsymbol{R}}}^{1\times k}\right)\left(\mathbb{P}\right)$
and $\beta$ is bounded. $W^{\mathbb{Q}}$ is defined as
\begin{equation}
dW_{t}^{\mathbb{Q}}=dW_{t}+\left[f\left(\hat{Z_{t}}\right)+\beta_{t}\left(\hat{Z_{t}}\right)^{T}\right]dt,\label{eq:3.2-1}
\end{equation}
which is a standard $k$-dimensional Brownian motion under $\mathbb{Q}$.
Since $C_{1}C_{3}<e^{-144}$ then by Theorem \ref{thm:Suppose--where}
the BSDE (\ref{eq:4.1-2}) has a solution pair $\left(Y,Z\right)\in\mathcal{S}^{\infty}\left(\mbox{\ensuremath{\boldsymbol{R}}}\right)\times\mathcal{B}\left(\mbox{\ensuremath{\boldsymbol{R}}}^{1\times k}\right)\left(\mathbb{P}\right)$
on $\left[0,T\right]$.
\end{proof}
\begin{thm}
\label{thm:When-d=00003D1,-suppose}When d=1, suppose $f$ and $\xi$
satisfy the above assumptions \textup{$\left(\mathcal{A}1\right)$}
and \textup{$\left(\mathcal{A}2\right)$,} then the BSDE\textup{ }
\begin{align}
\begin{cases}
dY_{t} & =Z_{t}f\left(Z_{t}\right)dt+Z_{t}dW_{t}\\
Y_{T} & =\xi
\end{cases}\label{eq:4.1-1-2}
\end{align}
has a unique solution pair\textup{ $\left(Y,Z\right)\in\mathcal{S}^{\infty}\left(\mbox{\ensuremath{\boldsymbol{R}}}\right)\times\mathcal{B}\left(\mbox{\ensuremath{\boldsymbol{R}}}^{1\times k}\right)$}
on $\left[0,T\right]$.
\end{thm}

\begin{proof}
Given any $C_{1}>0$, we can find $n$ large enough such that $\frac{1}{n}C_{1}C_{3}<e^{-144}$.
By Theorem \ref{thm:There-exits-} the following BSDE
\begin{align}
\begin{cases}
dY_{t}^{1} & =Z_{t}^{1}f\left(Z_{t}^{1}\right)dt+Z_{t}^{1}dW_{t}\\
Y_{T}^{1} & =\frac{\xi}{n}
\end{cases}\label{eq:4.1-1-2-1}
\end{align}
has a solution pair $\left(Y^{1},Z^{1}\right)\in\mathcal{S}^{\infty}\left(\mbox{\ensuremath{\boldsymbol{R}}}\right)\times\mathcal{B}\left(\mbox{\ensuremath{\boldsymbol{R}}}^{1\times k}\right)$
on $\left[0,T\right]$. Then by using induction we can show that for
$m=2,\cdots,n$ the following BSDE 
\begin{equation}
\begin{cases}
dY_{t}^{m} & =\left[\left(\stackrel[j=1]{m-1}{\sum}Z_{t}^{j}+Z_{t}^{m}\right)f\left(\stackrel[j=1]{m-1}{\sum}Z_{t}^{j}+Z_{t}^{m}\right)-\left(\stackrel[j=1]{m-1}{\sum}Z_{t}^{j}\right)f\left(\stackrel[j=1]{m-1}{\sum}Z_{t}^{j}\right)\right]dt+Z_{t}^{m}dW_{t}\\
Y_{T}^{m} & =\frac{\xi}{n}
\end{cases}\label{eq:4.1-2-1}
\end{equation}
has a solution pair $\left(Y^{m},Z^{m}\right)\in\mathcal{S}^{\infty}\left(\mbox{\ensuremath{\boldsymbol{R}}}\right)\times\mathcal{B}\left(\mbox{\ensuremath{\boldsymbol{R}}}^{1\times k}\right)$
on $\left[0,T\right]$ by Lemma \ref{thm:Given-,-suppose}. By adding
$Y^{i}$ and $Z^{i}$ together, i.e. letting 
\[
Z=\stackrel[j=1]{n}{\sum}Z^{j},Y=\stackrel[j=1]{n}{\sum}Y^{j},
\]
we get that 
\begin{equation}
\begin{cases}
dY_{t} & =Z_{t}f\left(Z_{t}\right)dt+Z_{t}dW_{t}\\
Y_{T} & =\xi
\end{cases},\label{eq:4.1-2-1-1}
\end{equation}
with $\left(Y,Z\right)\in\mathcal{S}^{\infty}\left(\mbox{\ensuremath{\boldsymbol{R}}}\right)\times\mathcal{B}\left(\mbox{\ensuremath{\boldsymbol{R}}}^{1\times k}\right)$
on $\left[0,T\right]$.

The uniqueness of the solution can be proved as follows. Suppose there
exist two pairs of solutions $\left(Y,Z\right),\left(\hat{Y},\hat{Z}\right)\in\mathcal{S}^{\infty}\left(\mbox{\ensuremath{\boldsymbol{R}}}\right)\times\mathcal{B}\left(\mbox{\ensuremath{\boldsymbol{R}}}^{1\times k}\right)$
on $\left[0,T\right]$ to the BSDE (\ref{eq:4.1-1-2}). By the same
argument used in (\ref{eq:4.2}), we can define a process $\beta$
taking values in $\mbox{\ensuremath{\boldsymbol{R}}}^{k\times k}$
with $\left\Vert \beta\right\Vert ^{2}\leq k^{2}C_{3}^{2}$ such that
\[
\left[f\left(Z_{t}\right)-f\left(\hat{Z_{t}}\right)\right]=\left[\left(Z_{t}-\hat{Z_{t}}\right)\beta_{t}\right]^{T}.
\]
It can be verified that $-\int\left[Z_{s}\beta_{s}^{T}+f\left(\hat{Z_{s}}\right)^{T}\right]dW_{s}\in$BMO$\left(\mathbb{P}\right)$
as $Z$ and $\hat{Z}$ belong to $\mathcal{B}\left(\mbox{\ensuremath{\boldsymbol{R}}}^{1\times k}\right)\left(\mathbb{P}\right)$
and $\beta$ is bounded. We may define probability measure $\mathbb{Q}$
by 
\begin{equation}
\left.\frac{d\mathbb{Q}}{d\mathbb{P}}\right|_{\mathcal{F}_{T}}={\cal E}\left(-\int\left[Z_{s}\beta_{s}^{T}+f\left(\hat{Z_{s}}\right)^{T}\right]dW_{s}\right)_{T}.
\end{equation}
$W^{\mathbb{Q}}$ is defined as
\begin{equation}
dW_{t}^{\mathbb{Q}}=dW_{t}+\left[\beta_{t}\left(Z_{t}\right)^{T}+f\left(\hat{Z_{t}}\right)\right]dt,\label{eq:3.2-1-1}
\end{equation}
which is a standard $k$-dimensional Brownian motion under $\mathbb{Q}$.
Then we have that
\begin{align*}
Y_{t}-\hat{Y}_{t} & =-\int_{t}^{T}\left[Z_{s}f\left(Z_{s}\right)-\hat{Z_{s}}f\left(\hat{Z_{s}}\right)\right]ds-\int_{t}^{T}\left(Z_{s}-\hat{Z_{s}}\right)dW_{s}\\
 & =-\int_{t}^{T}Z_{s}\left[f\left(Z_{s}\right)-f\left(\hat{Z_{s}}\right)\right]ds-\int_{t}^{T}\left(Z_{s}-\hat{Z_{s}}\right)f\left(\hat{Z_{s}}\right)ds-\int_{t}^{T}\left(Z_{s}-\hat{Z_{s}}\right)dW_{s}\\
 & =-\int_{t}^{T}Z_{s}\left[\left(Z_{s}-\hat{Z_{s}}\right)\beta_{s}\right]^{T}ds-\int_{t}^{T}\left(Z_{s}-\hat{Z_{s}}\right)f\left(\hat{Z_{s}}\right)ds-\int_{t}^{T}\left(Z_{s}-\hat{Z_{s}}\right)dW_{s}\\
 & =-\int_{t}^{T}\left(Z_{s}-\hat{Z_{s}}\right)dW_{s}^{\mathbb{Q}}.
\end{align*}
Since it can be verified by Corollary \ref{cor:Assume-that-} that
$-\int\left(Z_{s}-\hat{Z_{s}}\right)dW_{s}^{\mathbb{Q}}\in$BMO$\left(\mathbb{Q}\right)$,
then by taking the conditional expectation with respect to $\mathcal{F}_{t}$
under $\mathbb{Q}$ for $t\in\left[0,T\right]$ we get that $Y$ equals
$\hat{Y}$. Thus we also have that
\[
\boldsymbol{E}_{\mathbb{Q}}^{\mathcal{F}_{t}}\left(\int_{t}^{T}\left\Vert Z_{s}-\hat{Z_{s}}\right\Vert ^{2}ds\right)=0,
\]
for every $t\in\left[0,T\right]$, which implies that $Z$ equals
$\hat{Z}$.
\end{proof}

\section{A lower triangular quadratic example with bounded terminal values}

We consider the case when $d=k$. Let $\mathcal{F}^{i}$ be the Brownian
filtration of $W^{i}$ which is the $i$th component of a standard
$k$-dimensional Brownian motion $W$. Then by considering each $i$
as a one dimensional case and working with respect to $\mathcal{F}^{i}$
for $i=1,2\cdots k$, we deduce that the BSDE
\begin{align}
\begin{cases}
d\hat{Y}_{t}^{i} & =Z_{t}^{i}f_{i}\left(Z_{t}^{i}\right)dt+Z_{t}^{i}dW_{t}^{i}\\
\hat{Y}_{T}^{i} & =\xi^{i}-\xi^{i-1}
\end{cases}\label{eq:4.1-1-2-2}
\end{align}
where $f_{i}$ and $\xi^{i}$ satisfy the above assumptions $\left(\mathcal{A}1\right)$
and $\left(\mathcal{A}2\right)$ with $\xi^{0}=0$ and $\xi^{i}-\xi^{i-1}$
is $\mathcal{F}_{T}^{i}$ measurable for $i=1,2\cdots k$, has a solution
pair $\left(\hat{Y}^{i},Z^{i}\right)\in\mathcal{S}^{\infty}\left(\mbox{\ensuremath{\boldsymbol{R}}}\right)\times\mathcal{B}\left(\mbox{\ensuremath{\boldsymbol{R}}}\right)$
on $\left[0,T\right]$ by Theorem \ref{thm:When-d=00003D1,-suppose}.
Then $Y^{i}=\stackrel[j=1]{i}{\sum}\hat{Y}^{j}$ solves the following
BSDE:
\begin{align}
\begin{cases}
dY_{t}^{i} & =\stackrel[j=1]{i}{\sum}Z_{t}^{j}f_{j}\left(Z_{t}^{j}\right)dt+\stackrel[j=1]{i}{\sum}Z_{t}^{j}dW_{t}^{j}\\
Y_{T}^{i} & =\xi^{i}
\end{cases}\label{eq:4.1-1-2-2-2}
\end{align}
for $i=1,2\cdots k$. Then let $Y^{i}$ to be the $i$th component
of $Y$, $\xi^{i}$ to be the $i$th component of $\xi$ and $f_{i}\left(z_{i,i}\right)$
to be the $i$th component of $f\left(z\right)$ for all $z\in\mbox{\ensuremath{\boldsymbol{R}}}^{k\times k}$,
we have by defining the lower triangular $Z$ as follows: 
\[
Z=\left[\begin{array}{cccc}
Z^{1}\\
Z^{1} & Z^{2}\\
\vdots &  & \ddots\\
Z^{1} & Z^{2} & \cdots & Z^{k}
\end{array}\right]
\]
that $\left(Y,Z\right)\in\mathcal{S}^{\infty}\left(\mbox{\ensuremath{\boldsymbol{R}^{k}}}\right)\times\mathcal{B}\left(\mbox{\ensuremath{\boldsymbol{R}}}^{k\times k}\right)$
on $\left[0,T\right]$ is a solution pair to the following quadratic
BSDE

\begin{align}
\begin{cases}
dY_{t} & =Z_{t}f\left(Z_{t}\right)dt+Z_{t}dW_{t}\\
Y_{T} & =\xi.
\end{cases}\label{eq:4.1-1-2-2-1}
\end{align}

\begin{rem}
$\xi$ in (\ref{eq:4.1-1-2-2-1}) can be any bounded terminal value
satisfying the condition that $\xi^{i}-\xi^{i-1}$ is $\mathcal{F}_{T}^{i}$
measurable where $\xi^{i}$ is the $i$th component of $\xi$ with
$\xi^{0}=0$ for $i=1,2\cdots k$.
\end{rem}


\begin{thebibliography}{10}
\bibitem{key-1}P. Barrieu and N. El Karoui. Monotone stability of
quadratic semimartingales with applications to general quadratic BSDEs.
The Annals of Probability, Vol 41, 1831-1863 (2013).

\bibitem{key-2}P. Briand and R. Elie. A simple constructive approach
to quadratic BSDEs with or without delay. Stochastic Processes and
their Applications, 123(8), 2921\textendash 2939 (2013)\@.

\bibitem{key-3}P. Briand and Y. Hu. BSDE with quadratic growth and
unbounded terminal value. Probability Theory and Related Fields, 136,
No.4, 604-618 (2006).

\bibitem{key-4}P. Briand and Y. Hu. Quadratic BSDEs with convex generators
and unbounded terminal conditions. Probability Theory and Related
Fields, 141, 543-567 (2008).

\bibitem{key-5}P. Cheridito and K. Nam. Multidimensional quadratic
and subquadratic BSDEs with special structure. Stochastics, 87(5),
871-884 (2015).

\bibitem{key-6}S.N. Cohen and R.J. Elliott. Stochastic calculus and
applications. Springer-Verlag (2015).

\bibitem{key-7}F. Delbaen, Y. Hu and A. Richou. On the uniqueness
of solutions to quadratic BSDEs with convex generators and unbounded
terminal conditions. Probabilités et Statistiques, Vol.47, No.2, 559-574
(2011).

\bibitem{key-8}Y. Hu and S. Tang. Multi-dimensional backward stochastic
differential equations of diagonally quadratic generators. Stochastic
Processes and their Applications, 126(4), 1066\textendash 1086 (2016).

\bibitem{key-9}N. Kazamaki. Continuous exponential martingales and
BMO, Lecture Notes in Mathematics, 1579. Springer-Verlag, Berlin (1994).

\bibitem{key-10}M. Kobylanski. Backward stochastic differential equations
and partial differential equations with quadratic growth. The Annals
of Probability, Vol.28, No. 2, 558\textendash 602 (2000).

\bibitem{key-11}E. Pardoux and S.G. Peng. Adapted solution of a backward
stochastic differential equation. Systems \& Control letters, 14,
55-61 (1990).

\bibitem{key-12}R. Tevzadze. Solvability of backward stochastic differential
equations with quadratic growth. Stochastic Processes and their Applications,
118(3), 503\textendash 515 (2008)\@.
\end{thebibliography}
\end{document}